\crefname{theorem}{Theorem}{Theorems}
\crefname{thm}{Theorem}{Theorems}
\crefname{lemma}{Lemma}{Lemmas}
\crefname{lem}{Lemma}{Lemmas}
\crefname{remark}{Remark}{Remarks}
\crefname{prop}{Proposition}{Propositions}
\crefname{defn}{Definition}{Definitions}
\crefname{corollary}{Corollary}{Corollaries}
\crefname{conjecture}{Conjecture}{Conjectures}
\crefname{question}{Question}{Questions}
\crefname{chapter}{Chapter}{Chapters}
\crefname{section}{Section}{Sections}
\crefname{figure}{Figure}{Figures}
\theoremstyle{plain}
\newtheorem{thm}{Theorem}[section]
\newtheorem*{thm*}{Theorem}
\newtheorem{lemma}[thm]{Lemma}
\newtheorem{corollary}[thm]{Corollary}
\newtheorem{prop}[thm]{Proposition}
\theoremstyle{definition}
\theoremstyle{remark}
\numberwithin{equation}{section}
\renewcommand{\P}{\mathbb P}
\newcommand{\E}{\mathbb E}
\newcommand{\R}{\mathbb R}
\newcommand{\Z}{\mathbb Z}
\newcommand{\N}{\mathbb N}
\newcommand{\F}{\mathfrak F}
\newcommand{\cF}{\mathcal F}
\newcommand{\cG}{\mathcal G}
\newcommand{\cT}{\mathcal T}
\newcommand{\sA}{\mathscr A}
\newcommand{\sB}{\mathscr B}
\newcommand{\sC}{\mathscr C}
\newcommand{\sD}{\mathscr D}
\newcommand{\sE}{\mathscr E}
\newcommand{\sI}{\mathscr I}
\newcommand{\sM}{\mathscr M}
\newcommand{\sW}{\mathscr W}
\newcommand{\fF}{\mathfrak F}
\newcommand{\fG}{\mathfrak G}
\newcommand{\eps}{\varepsilon}
\newcommand{\symdif}{\hspace{.1em}\triangle\hspace{.1em}}
\newcommand{\bP}{\mathbf{P}}
\title{\bf Indistinguishability of collections of trees in the uniform spanning forest}
\renewenvironment{abstract}
 {\par\noindent\textbf{\abstractname.}\ \ignorespaces}
 {\par\medskip}
\author{{\bf Tom Hutchcroft}}
\begin{document}

\date{\small{\today}}

\maketitle


\begin{abstract}
We prove the following indistinguishability theorem for $k$-tuples of trees in the uniform spanning forest of $\Z^d$: Suppose that $\sA$ is a property of a $k$-tuple of components that is stable under finite modifications of the forest. Then either every $k$-tuple of distinct trees has property $\sA$ almost surely, or no $k$-tuple of distinct trees has property $\sA$ almost surely. This generalizes the indistinguishability theorem of the author and Nachmias (2016), which applied to individual trees. Our results apply more generally to any graph that has the Liouville property and for which every component of the USF is one-ended.

\end{abstract}

\setstretch{1.1}

\section{Introduction}

The \textbf{uniform spanning forests} are infinite-volume analogues of uniform spanning trees, and can be defined for any connected, locally finite graph $G$ as weak limits of the uniform spanning trees on certain finite graphs derived from $G$. These limits can be taken with respect to two extremal boundary conditions, leading to the \textbf{free uniform spanning forest} (FUSF) and \textbf{wired uniform spanning forest} (WUSF). For many graphs, such as the hypercubic lattice $\Z^d$,  the FUSF and WUSF coincide and we speak simply of the USF.
Being an open condition, connectivity is not necessarily preserved by taking weak limits, and it is possible for the USF to be disconnected. Indeed, Pemantle \cite{Pem91} proved that the USF of $\Z^d$ is a.s.\ connected if and only if $d\leq 4$. More generally, Benjamini, Lyons, Peres, and Schramm \cite{BLPS,lyons2003markov} proved that the WUSF of an infinite graph $G$ is a.s.\ connected if and only if two independent random walks on $G$ intersect infinitely often a.s.

This disconnectivity leads us to consider the following natural question: If the USF is disconnected, how different can the different components of the forest be? 
For instance, is it possible that some are recurrent while others are transient? Similarly, could it be possible that there exists a single ``thick" component that occupies a positive density of space, while all other components are ``thin" and have zero spatial density?
Benjamini, Lyons, Peres, and Schramm \cite{BLPS} conjectured the following answer to questions of this nature: If $G=(V,E)$ is \emph{transitive} and \emph{unimodular}
  (e.g., if $G$ is a Cayley graph of a finitely generated group) and $\F$ is either the WUSF or FUSF of $G$, then the components of $\F$ are \emph{indistinguishable} from each other. This means that for every measurable set $\sA \subseteq \{0,1\}^E$ of subgraphs of $G$ that is invariant under the automorphisms of $G$, either every component of $\F$ is in $\sA$ a.s.\ or none of the components of $\F$ are in $\sA$ a.s. This conjecture followed earlier work of Lyons and Schramm \cite{LS99}, who proved an analogous theorem in the context of Bernoulli percolation.  The conjecture regarding the USF was verified (in slightly greater generality) by the author and Nachmias \cite{HutNach2016a}, while partial progress was also made in the independent work of Tim\'ar \cite{timar2015indistinguishability}. 
  In the setting of Bernoulli percolation, various extensions and generalizations of the Lyons-Schramm theorem have subsequently been obtained by Aldous and Lyons \cite{AL07}, Martineau \cite{martineau2012ergodicity}, and Tang \cite{tang2018heavy}. 
  Besides their intrinsic probabilistic interest, such indistinguishability theorems have also found applications in ergodic theory, see e.g.\ \cite{MR3621428,TDprep,gaboriau2009measurable}. 


In this paper, we are interested in a form of indistinguishability that holds not only for individual components of the forest, but rather for arbitrary finite collections of components. 
Our results are motivated by our work with Yuval Peres  on the adjacency structure of the trees in the USF of $\Z^d$ \cite{hutchcroft2017component}, in which we use  the results of this paper as a zero-one law to boost positive-probability statements to almost-sure statements.
%
%
 A remarkable feature of the results we obtain is that, unlike in \cite{LS99,HutNach2016a}, we do not require any kind of homogeneity assumptions on $G$ (such as transitivity or unimodularity), nor do we require any kind of automorphism-invariance type assumptions on the properties we consider.
 Rather, the primary assumption we make on $G$ is that it is \emph{Liouville}, i.e., does not admit non-constant bounded harmonic functions. We also restrict attention to \emph{tail} properties of tuples of components, which are stable under finite modifications of the forest. Heuristically, one can think of our proof as lifting the tail triviality of the random walk (which is equivalent to the Liouville property) to indistinguishability of trees in the USF, which is itself a strong form of tail triviality.

Let us now give the definitions required to state our main theorems. 
Let $G$ be a graph, and let $k\geq 1$. We equip the set $\Omega_{k}(G):=\{0,1\}^E\times V^k$ with its product topology and associated Borel $\sigma$-algebra. We think of this set as the set of subgraphs of $G$ rooted at an ordered $k$-tuple of vertices. A measurable set $\sA \subseteq \Omega_k(G)$ is said to be a $k$-\textbf{component property} if 
\[ (\omega,(u_i)_{i=1}^k)\in \sA \Longrightarrow (\omega,(v_i)_{i=1}^k)\in \sA \,\,\,
\begin{array}{l}
\text{for all } (v_i)_{i=1}^k \in V^k \text{ such that $u_i$ is} \\\text{connected to $v_i$ in $\omega$ for each $i=1,\ldots,k$}.
\end{array}
\]
In other words, $\sA$ is a $k$-component property if it is invariant under replacing the root vertices with other root vertices from within the same components. We call $\sA$ a \textbf{multicomponent property} if it is a $k$-component property for some $k$. 
Given a $k$-component property $\sA$, we say that a $k$-tuple of components $K_1,\ldots,K_k$ of a configuration $\omega \in \{0,1\}^E$ \textbf{has property} $\sA$ if $(\omega,(v_i)_{i=1}^k) \in \sA$ whenever $u_1,\ldots,u_k$ are vertices of $G$ such that $u_i \in K_i$ for every $1 \leq i \leq k$. 


Given a vertex $v$ of $G$ and a configuration $\omega \in \{0,1\}^E$, let $K_\omega(v)$ denote the connected component of $\omega$ containing $v$. We say that a $k$-component property $\sA$ is a \textbf{tail} $k$-component property if 
\[ (\omega,(v_i)_{i=1}^k)\in \sA \Longrightarrow (\omega',(v_i)_{i=1}^k)\in \sA \,\,\,
\begin{array}{l}
\forall \omega' \in \{0,1\}^E \text{ such that } \omega \symdif \omega' \text{ is finite and }\\
 K_\omega(v_i)\symdif K_{\omega'}(v_i) \text{ is finite for every $ i =1,\ldots,k$,}
\end{array}
\]
where $\symdif$ denotes the symmetric difference. 
In other words, tail multicomponent properties are stable under finite modifications to $\omega$ that result in finite modifications to each of the components of interest $K_\omega(v_1),\ldots,K_\omega(v_k)$. For example, if $G$ is locally finite then the set $\sA_{k}$ of $(\omega,(v_i)_{i=1}^k)$ such that $K_\omega(v_i)$ contains infinitely many vertices adjacent to $K_\omega(v_j)$ for every $1 \leq i < j \leq k$  is a tail $k$-component property for each $k \geq 2$. 

We now state our result in the case $G=\Z^d$. The general result is given below.


\begin{thm}
\label{thm:indist}
Let $d\geq 5$ and let $\F$ be the uniform spanning forest of $\Z^d$.
 Then for each $k\geq 1$ and each tail $k$-component property $\sA \subseteq \Omega_k(\Z^d)$, either every $k$-tuple of distinct connected  components of $\F$ has property $\sA$ almost surely or no $k$-tuple of distinct connected  components of $\F$ has property $\sA$ almost surely.
\end{thm}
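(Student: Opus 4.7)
The plan is to extend the Hutchcroft--Nachmias strategy (used for the case $k=1$) to $k$-tuples of components, with Wilson's algorithm rooted at infinity as the central technical tool. Heuristically, the idea is to lift the tail triviality of simple random walk (equivalent to the Liouville property of $\Z^d$) to tail triviality of $k$-tuples of trees in the USF.

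For each $k$-tuple $\mathbf{v} = (v_1, \ldots, v_k) \in V^k$, I would consider $h_\sA(\mathbf{v}) := \P\bigl(K_\F(v_1),\ldots,K_\F(v_k)$ are pairwise distinct and $(\F,\mathbf{v}) \in \sA\bigr)$ and $g(\mathbf{v}) := \P\bigl(K_\F(v_i)$ are pairwise distinct$\bigr)$, reducing the theorem to showing $h_\sA(\mathbf{v}) \in \{0, g(\mathbf{v})\}$ for every $\mathbf{v}$. To access $h_\sA$, I would generate $\F$ via Wilson's algorithm starting with $k$ independent random walks $\mathbf{X}^1, \ldots, \mathbf{X}^k$ from $v_1, \ldots, v_k$; the loop-erasures $\gamma^i := \mathrm{LE}(\mathbf{X}^i)$ form the spines of the components $K_\F(v_i)$, and the remainder of $\F$ is completed by independent LERWs from a fixed enumeration of the remaining vertices. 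The event $\{K_\F(v_i) \text{ distinct, } (\F,\mathbf{v}) \in \sA\}$ is then a measurable function of this randomness.

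The main step is to show that this event is (modulo null sets) measurable in the tail $\sigma$-algebra of the joint walk $(\mathbf{X}^1, \ldots, \mathbf{X}^k)$, together with the auxiliary Wilson's randomness. The guiding heuristic is that modifying a finite prefix of the walks produces only finite modifications to the loop-erasures $\gamma^i$, and by one-endedness of USF components on $\Z^d$ (for $d \geq 5$), only finite symmetric-difference changes to each $K_\F(v_i)$. Since $\sA$ is a tail $k$-component property, the indicator of the event is preserved. Granted this tail-measurability, the Liouville property supplies triviality of the tail of each walk, independence gives joint triviality, and tail triviality of the USF absorbs the remaining randomness; the conclusion $h_\sA(\mathbf{v}) \in \{0, g(\mathbf{v})\}$ then follows.

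The main obstacle I anticipate is controlling the distinctness-of-components condition under modifications. A finite modification of $\mathbf{X}^i$ could cause $\gamma^i$ to meet $\gamma^j$ for some $j \neq i$, thereby merging the previously-distinct components $K_\F(v_i)$ and $K_\F(v_j)$; the symmetric difference of the affected component is then infinite and the tail property cannot be invoked naively. My plan to handle this is to restrict attention to a ``good'' sub-event on which such mergers do not occur under the modification, and to bound the contribution of the bad event via LERW meeting probabilities in $\Z^d$ for $d \geq 5$. A final argument is then needed to promote the pointwise statement $h_\sA(\mathbf{v}) \in \{0, g(\mathbf{v})\}$ to the claimed uniform dichotomy over all $\mathbf{v}$ simultaneously; this should come from showing that the ratio $h_\sA(\mathbf{v})/g(\mathbf{v})$ is a bounded multiharmonic function on $(\Z^d)^k$ and invoking Liouville for the product walk.
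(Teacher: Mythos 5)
Your high-level heuristic is the right one — lift the Liouville/tail-triviality of the random walk to the forest via Wilson's algorithm — and you correctly identify both the role of one-endedness and the merger obstacle. But the central step of your plan, ``show that the event is measurable in the tail $\sigma$-algebra of the joint walk plus the auxiliary Wilson's randomness,'' cannot work as stated, and your own identified obstacle is the reason. Take $\sA = \Omega_k$: the event is then just $\sW(\mathbf{v})$, which has probability strictly between $0$ and $1$ for $k\geq 2$ and $d\geq 5$, so it is certainly not tail-trivial. More generally, whether the loop-erasures $\gamma^i$ coalesce is decided by finite-time intersections and is not a tail event. So tail-measurability of $\{(\F,\mathbf{v})\in\sA\}\cap\sW$ fails, and even if you could somehow establish it, the conclusion would be $\P(\{(\F,\mathbf{v})\in\sA\}\cap\sW)\in\{0,1\}$, which — combined with $\P(\sW)\in(0,1)$ — would force the answer to always be $0$. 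Restricting to a ``good sub-event'' and estimating LERW meeting probabilities does not repair a zero-one law: you would need to explain why conditioning on the good event preserves tail-triviality, which is exactly what fails. There is also a second, subtler gap: even on the event that no mergers occur, a finite change to $\gamma^i$ can change where later Wilson walks $Y^j$ are absorbed, and you need to rule out a cascade that moves infinitely many vertices between trees. One-endedness does control this, but not for free — it requires the kind of explicit truncation argument (the interpolating forests $\F_{m,R}$ built from the sub-forest $\F_R$ of futures escaping a ball) that appears in the paper's Lemma~\ref{lem:indistlem}.

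The paper's route avoids asserting any tail-measurability of the event itself. Instead Lemma~\ref{lem:indistlem} proves the identity $\P\bigl((\F,\mathbf{u})\in\sA\mid\sW(\mathbf{u})\bigr)=\lim_{m\to\infty}\P\bigl((\F,\mathbf{X}_m)\in\sA\bigr)$, where the right-hand side is \emph{unconditioned} — the conditioning on $\sW$ disappears in the limit because, by \eqref{eq:liouvilleintersections}, the time-$m$ shifted walks eventually avoid each other with probability tending to $1$. This is precisely what resolves the merger obstacle. Constancy in $\mathbf{u}$ (Corollary~\ref{lem:constantprobs}) then follows from the Markov property and tail-triviality of the walk, not from a multiharmonicity argument, and — crucially — constancy alone does not give the $0$--$1$ dichotomy. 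That last step is a separate argument (Lemma~\ref{lem:indistlem2} and the proof of Theorem~\ref{thm:indist}), in which one conditions on the forest restricted to a ball $B(u_1,r)$, uses the spatial Markov property and the Liouville property of the contracted/deleted graph $\widehat G$, and finally invokes tail-triviality of the USF itself (Theorem~\ref{thm:USFtailtriviality}) to show that this conditioning has no effect; letting $r\to\infty$ then gives $P(\sA)\in\{0,1\}$. Your proposal has no counterpart to this second half, and the ``USF tail triviality absorbs the remaining randomness'' sentence is not a substitute for it.
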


The general form of our result will require the underlying graph to be \emph{Liouville}, i.e., not admitting any non-constant bounded harmonic functions. Note that if $G$ is Liouville then its free and wired uniform spanning forests coincide \cite[Theorem 7.3]{BLPS}, so that we may speak simply of the USF of $G$. Our proof will also require that every component of the USF of $G$ is \emph{one-ended} almost surely. 
Here, an infinite graph is said to be 
\textbf{one-ended} if deleting any finite set of vertices from the graph results in at most one infinite connected component. In particular, a 
tree is one-ended if it does not contain a simple bi-infinite path. It is known that every component of the wired uniform spanning forest is one-ended almost surely in several large classes of graphs \cite{Pem91,BLPS,AL07,LMS08,H15,HutNach2016b,hutchcroft2015interlacements}, including all transitive graphs not rough-isometric to $\Z$ \cite{LMS08}. In particular, \cref{thm:indist} applies to all of the transitive graphs of polynomial growth that are studied in \cite{hutchcroft2017component}. (The condition that $G$ is one-ended in the following theorem is in fact redundant, being implied by the other hypotheses.)

\begin{thm}
\label{thm:indistgeneral}
Let $G=(V,E)$ be a infinite, one-ended, connected, locally finite graph, and suppose that $G$ is Liouville.  
Let $\F$ be the uniform spanning forest of $G$, and suppose further that every component of $\F$ is one-ended almost surely.
 Then for each $k\geq 1$ and each tail $k$-component property $\sA \subseteq \Omega_k(G)$, either every $k$-tuple of distinct connected  components of $\F$ has property $\sA$ almost surely or no $k$-tuple of distinct connected  components of $\F$ has property $\sA$ almost surely.
\end{thm}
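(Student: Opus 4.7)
The plan is to generalise the argument of \cite{HutNach2016a} from individual components to $k$-tuples, by running $k$ coupled random walkers inside the USF and exploiting the update tolerance of the WUSF. Fix $k\geq 1$, let
\[
\Omega_k^{\ast}(G):=\bigl\{(\omega,\mathbf{v})\in\Omega_k(G): v_1,\dots,v_k\text{ lie in $k$ distinct components of }\omega\bigr\},
\]
and equip $\Omega_k^{\ast}$ with the $\sigma$-finite measure $\mu_k$ obtained by restricting the product of $\WUSF$ and counting measure on $V^k$. The goal is to build a reversible Markov chain on $(\Omega_k^{\ast},\mu_k)$ whose tail $\sigma$-algebra is $\mu_k$-trivial and under which tail $k$-component properties are almost surely invariant.

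The chain is a $k$-walker version of the step-and-update chain of \cite{HutNach2016a}. At each step I pick a coordinate $i\in\{1,\dots,k\}$ uniformly at random, pick a uniform neighbour $w$ of $v_i$ in $G$, and apply the update-tolerance swap that deletes the edge from $v_i$ toward infinity in its component and adds the edge $\{v_i,w\}$, setting $v_i\leftarrow w$; one-endedness of components is used here to define the ``edge toward infinity''. The proposal is accepted only if the resulting configuration remains in $\Omega_k^{\ast}$, i.e.\ the $k$ walkers still inhabit $k$ distinct components. Because the update-tolerance swap is an involution and the acceptance rule is symmetric in forward/backward moves, an essentially verbatim generalisation of the computation in \cite{HutNach2016a} shows that the chain is reversible with respect to $\mu_k$.

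The core of the proof is to show this chain has $\mu_k$-trivial tail. The walker marginal $\mathbf{v}^{(n)}\in V^k$ follows a ``pick a coordinate and step'' lazy simple random walk on $G^k$, modulo rejections; since $G$ is Liouville, any bounded $V^k$-function harmonic for this walk is separately harmonic in each coordinate and hence constant, so the walker marginal is tail-trivial. Combining this with reversibility and the fact that each single step of the chain modifies $\F$ only by a single-edge swap, I would adapt the ergodic-theoretic argument of \cite{HutNach2016a} to conclude that the tail $\sigma$-algebra of $(\F_n,\mathbf{v}^{(n)})$ is $\mu_k$-trivial. By the definition of a tail $k$-component property, the event $\{(\F_0,\mathbf{v}^{(0)})\in\sA\}$ is invariant under every single-step update of the chain and is therefore a tail event; by tail triviality it is $\mu_k$-a.s.\ constant, which translates back to the claimed $0/1$ law.

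I expect the main obstacle to be controlling the rejected proposals, which occur precisely when a walker would step into another walker's component. One needs their asymptotic density along the chain to be zero so that both the reversibility computation and the tail-triviality argument survive at the level of averages. In the regime where the theorem has content, the BLPS intersection criterion \cite{BLPS} guarantees that two independent simple random walks on $G$ intersect only finitely often almost surely, and combined with the one-endedness of USF components (which bounds how much of a neighbouring tree the walker can see in a single step) this should deliver the required control. The Liouville hypothesis plays two distinct roles in the argument: it forces $\FUSF$ and $\WUSF$ to coincide so that there is an unambiguous USF, and it is what delivers tail triviality of the walker marginal through the classical equivalence between the Liouville property and triviality of the Poisson boundary.
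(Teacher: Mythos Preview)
Your approach is genuinely different from the paper's, and there is a fatal gap in the step where you claim that ``the event $\{(\F_0,\mathbf{v}^{(0)})\in\sA\}$ is invariant under every single-step update of the chain.'' This is false for tail $k$-component properties, and not for the reason you anticipate.

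Consider an accepted step in which walker $i$ at $v_i$ moves to a neighbour $w$ lying in a tree $T''$ that contains \emph{no} walker (such steps are accepted under your rule, and they occur constantly since there are infinitely many trees). The update $\omega\mapsto\omega'$ swaps two edges, and in $\omega'$ the vertex $v_i$ lies in $T''\cup\mathrm{past}_\omega(v_i)$, whereas in $\omega$ it lay in its old tree $T_i$. Thus $K_\omega(v_i)\symdif K_{\omega'}(v_i)$ is infinite, and the tail-property clause does \emph{not} apply with the roots fixed at $(v_1,\ldots,v_i,\ldots,v_k)$. You might try instead to fix the roots at $(v_1,\ldots,w,\ldots,v_k)$: then $K_\omega(w)\symdif K_{\omega'}(w)=\mathrm{past}_\omega(v_i)$ is finite, so the tail clause gives $(\omega,(\ldots,w,\ldots))\in\sA\iff(\omega',(\ldots,w,\ldots))\in\sA$; but now you need $(\omega,(\ldots,v_i,\ldots))\in\sA\iff(\omega,(\ldots,w,\ldots))\in\sA$, which requires $v_i$ and $w$ to lie in the same component of $\omega$, and they do not. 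No composition of the component and tail invariances bridges this step. A concrete counterexample in high dimension: take $\sA$ to be the property that all $k$ trees are pairwise adjacent. This is a tail $k$-component property, yet your chain can move walker $i$ from a tree adjacent to walker $j$'s tree to one that is not.

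The obstacle you flag---rejections when a walker would enter another walker's tree---is secondary and in any case not governed by the random-walk intersection criterion you cite: a walker stepping into another walker's \emph{tree} is a question about hitting an infinite set, not about two walk trajectories intersecting. But even granting perfect control of rejections, the argument cannot conclude without the invariance claim, and that claim is simply wrong.

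For comparison, the paper avoids the update-tolerance chain entirely. It samples $\F$ via Wilson's algorithm \emph{starting} with the walks $X^1,\ldots,X^k$, so that for each $m$ the forest $\fG_m$ generated from $(X^i_{m+\cdot})_i$ is equidistributed with $\F$ and has $X^i_m$ lying on the loop-erased spine of the $i$th tree. A coupling that interpolates between $\fG_0$ and $\fG_m$ through a common ``outer'' forest $\F_R$ shows $\P((\F,\mathbf{u})\in\sA\mid\sW(\mathbf{u}))=\lim_m\P((\F,\mathbf{X}_m)\in\sA)$; the Liouville property then makes this limit independent of $\mathbf{u}$, and a conditional version together with tail triviality of $\F$ forces the value to be $0$ or $1$. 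The point is that in this coupling the walkers never ``switch'' trees---the trees are grown around them---so the tail clause is always applicable.
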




The special case of \cref{thm:indist} concerning a single component (i.e., $k=1$)
is essentially equivalent to \cite[Theorem 4.5]{BeKePeSc04}, which was not phrased in terms of indistinguishability. \cref{thm:indist} is also closely related to \cite[Theorem 1.20]{HutNach2016a}, which implies in particular that components of the wired uniform spanning forest of any transitive graph are indistinguishable from each other  by automorphism-invariant tail properties.
See \cref{sec:closing} for a discussion of how \cref{thm:indistgeneral} can fail in the absence of the assumption that $G$ is Liouville, even if we require that $G$ is a Cayley graph and $\sA$ is automorphism invariant.

We will assume that the reader is familiar with the definition of the uniform spanning forest and with Wilson's algorithm, referring them to e.g.\ \cite{LP:book} for background otherwise. 

\section{Proof}
\label{sec:indist}

Indistinguishability is closely related to tail-triviality. 
Let $\Omega$ be a measurable space, let $I$ be a countable set, and let $\Omega^I = \{(\omega_i)_{i\in I} : \omega_i \in \Omega \text{ for every $i \in I$}\}$ be  equipped with the product $\sigma$-algebra $\cF$. 
For each subset $J$ of $I$, 
we define $\cF_J$ to be the sub-$\sigma$-algebra of $\cF$ of events depending only on $(\omega_i)_{i\in J}$. The \textbf{tail $\sigma$-algebra} of $\Omega^I$ is defined to be the intersection 
\[\mathcal{T}=\bigcap \left\{\cF_J : I \setminus J \text{ is finite} \right\}.\] An $\Omega^I$-valued random variable $A=(A_i)_{i\in I}$ is said to be \textbf{tail-trivial} if it has probability either zero or one of belonging to any set in the tail $\sigma$-algebra $\cT$. 

The following was proven by Benjamini, Lyons, Peres, and Schramm \cite[Theorem 8.3]{BLPS}, generalizing a result of Pemantle \cite{Pem91}.

\begin{thm}
\label{thm:USFtailtriviality}
Let $G=(V,E)$ be an infinite, connected, locally finite graph, and let $\F \in \{0,1\}^E$ be either the free or wired uniform spanning forest of $G$. Then $\F$ is tail-trivial.
\end{thm}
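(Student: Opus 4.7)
The plan is to deduce the theorem from the determinantal structure of the uniform spanning forests. The first step is to recall the classical theorem of Burton and Pemantle that on any finite connected graph $H$, the UST on $H$ is a determinantal probability measure on $\{0,1\}^{E_H}$ whose kernel $Y_H$ on $\ell^2(E_H)$ coincides with the orthogonal projection onto the star subspace (equivalently, the orthogonal complement of the cycle subspace). Taking weak limits along an exhaustion of $G$ by finite subgraphs with either wired or free boundary conditions, \cite{BLPS} established that the WUSF and FUSF are themselves determinantal probability measures on $\{0,1\}^E$ whose kernels $Y^W,Y^F$ are orthogonal projections in $\ell^2(E)$, onto the appropriate infinite-dimensional analogues of the star subspace.

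The second step is to invoke the general tail-triviality theorem for determinantal probability measures whose kernel is an orthogonal projection: any such measure on a countable index set has trivial tail $\sigma$-algebra. Applied to $Y^W$ and $Y^F$, this immediately yields the theorem. The engine of this general fact is the conditioning identity for determinantal measures: for any finite $S\subset E$ and any $\eta\in\{0,1\}^S$, the conditional law of $\F$ on $E\setminus S$ given $\F\cap S=\eta$ is again a determinantal probability measure whose kernel is an orthogonal projection, obtained from the original kernel by a rank-at-most-$|S|$ modification. Combining this with a martingale convergence argument as $S$ exhausts $E$, one shows that for any tail event $A$ one has $\P(A\mid \F\cap S)=\P(A)$ almost surely for every finite $S\subset E$, and since the tail $\sigma$-algebra is contained in $\bigcap_S \sigma(\F\cap(E\setminus S))$, this forces $\mathbbm{1}_A=\P(A)$ almost surely, i.e., $\P(A)\in\{0,1\}$.

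The principal obstacle would be carrying out the second step rigorously: in particular, one must verify that the perturbed kernel remains an orthogonal projection after conditioning, and that the correlation functions $\P(F\subset \F\mid \F\cap S=\eta)$ really are given by the expected determinantal formula involving this perturbed kernel. These are the technical cruxes of the theory of determinantal probability measures, developed in Lyons' work on determinantal measures and underlying \cite[Theorem 8.3]{BLPS}. For the purposes of the present paper I would simply cite this pre-existing machinery rather than reproduce its proof.
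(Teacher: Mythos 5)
This statement appears in the paper only as a quoted result, \cite[Theorem 8.3]{BLPS}, with no proof supplied, so there is no internal argument to compare against; your outline --- the Burton--Pemantle transfer current theorem for the UST, passage to the free and wired limits to realize $\F$ as a determinantal measure whose kernel is an orthogonal projection, and then the general tail-triviality theorem for such measures, with the conditioning-as-finite-rank-perturbation identity as the technical engine --- is a faithful account of the standard proof from the literature (the argument of \cite{BLPS}, later systematized in Lyons's work on determinantal probability measures). Deferring to that machinery by citation, as you propose, is exactly what the paper itself does, so the proposal is fine.
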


In particular, if the USF of $G$ has only one component a.s.\ then \cref{thm:indistgeneral} is implied by \cref{thm:USFtailtriviality}. Thus, it suffices to prove \cref{thm:indistgeneral} in the case that the USF of $G$ has more than one component with positive probability, in which case $G$ must be transient. 

Recall that the \textbf{lazy random walk} on a graph is the random walk that stays where it is with probability $1/2$ at each step, but otherwise chooses a uniform edge emanating from its current location just as the usual random walk does. Note that we can use lazy random walks instead of simple random walks when sampling the WUSF of a graph using Wilson's algorithm, since doing so does not affect the law of the resulting forest. This will be useful to us thanks to the following well-known theorem due to Blackwell \cite{Blackwell55} and Derriennic \cite{Derriennic80}; see also \cite[Corollary 14.13 and Theorem 14.18]{LP:book}.
 (Laziness is used in this theorem to avoid parity issues.)

\begin{thm}
Let $G=(V,E)$ be an infinite, connected, locally finite graph, and let $X \in V^\N$ be a lazy random walk on $G$ started at some vertex $v$. Then $G$ is Liouville if and only if $X$ is tail-trivial.  
\end{thm}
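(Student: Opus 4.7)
The plan is to prove both directions by relating the tail $\sigma$-algebra of the lazy walk to bounded harmonic functions on $G$, via the bounded martingale $h(X_n)$ and L\'evy's upward $0$--$1$ law.

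For the easy direction, assume $X$ is tail-trivial and let $h$ be a bounded harmonic function on $G$. Then $M_n := h(X_n)$ is a bounded martingale under $\P_v$, so it converges almost surely to a limit $M_\infty$ that is measurable with respect to the tail $\sigma$-algebra of $X$. Tail triviality forces $M_\infty$ to be almost surely equal to a constant $c$, and the martingale property gives $h(v) = \E_v[M_0] = \E_v[M_\infty] = c$. To extend this to an arbitrary vertex $u$, I would use connectivity and the strong Markov property: the lazy walk from $v$ hits $u$ with positive probability, and conditionally on the hitting time $T_u$ being finite the post-$T_u$ trajectory has the law of a lazy walk started at $u$, whose $h$-martingale must also converge to the same limit; this yields $h(u)=c$, so $h$ is constant and $G$ is Liouville.

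For the harder direction, assume $G$ is Liouville and let $A$ be a tail event. The guiding idea is to exhibit a bounded harmonic function $h_A$ on $G$ such that $\P_v(A \mid \cF_n) = h_A(X_n)$, where $\cF_n := \sigma(X_0,\dots,X_n)$; once this is in hand, L\'evy's upward $0$--$1$ law gives $h_A(X_n) \to \mathbbm{1}_A$ almost surely, and the Liouville hypothesis forces $h_A$ to be constant, so $\mathbbm{1}_A$ is almost surely constant and $\P_v(A) \in \{0,1\}$. The Markov property together with tail-measurability of $A$ gives $\P_v(A\mid \cF_n) = f_n(X_n)$ where $f_n(w):=\P_v(A\mid X_n=w)$ satisfies the space-time harmonicity relation $f_n = Pf_{n+1}$ with $P$ the transition operator; the step I would take is to pass from this to a genuine time-independent harmonic function. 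The cleanest route is to reduce, modulo $\P_v$-null sets, to a \emph{shift-invariant} event $A^*$ equivalent to $A$, for which $h_{A^*}(u) := \P_u(A^*)$ is automatically harmonic by the Markov property.

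The main obstacle is the reduction from tail to shift-invariant events. This is where the laziness hypothesis enters: on a bipartite graph, for example, the event that $X_n$ lies in a fixed part of the bipartition for all large even $n$ is a non-trivial tail event that is not shift-invariant, and laziness rules out such parity obstructions by making the chain aperiodic. To carry out the reduction rigorously, I would couple two independent lazy walks started at the same vertex and use aperiodicity together with the Liouville hypothesis to argue that they generate the same tail $\sigma$-algebra up to null sets, which forces every tail event to be almost surely shift-invariant. An alternative, more self-contained route is to work directly with the space-time harmonic family $(f_n)$: using the bounded pointwise convergence $f_n(X_n)\to \mathbbm{1}_A$ along the walk together with a compactness/diagonal extraction of subsequential limits, one can produce a genuine bounded harmonic function on $G$, which Liouville then collapses to a constant.
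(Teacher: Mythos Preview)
The paper does not give its own proof of this theorem: it is stated as a known result attributed to Blackwell and Derriennic, with a pointer to \cite[Corollary 14.13 and Theorem 14.18]{LP:book}. There is therefore nothing in the paper to compare your argument against.

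On its own merits your outline is essentially correct and follows the classical route. The easy direction is complete as written. For the hard direction you correctly isolate the crux---upgrading the space--time harmonic family $f_n$ to a genuine harmonic function, equivalently reducing from tail events to shift-invariant events---and you correctly identify laziness (aperiodicity) as what makes this possible, with the bipartite parity obstruction as the right cautionary example. One small correction: in your coupling sketch you invoke ``aperiodicity together with the Liouville hypothesis'' to show tail equals invariant, but in fact the Liouville property plays no role in that step; aperiodicity and irreducibility alone give $\cT=\cI$ modulo null sets (this is precisely the Derriennic result), and only afterwards do you apply Liouville to the harmonic function $u\mapsto \P_u(A^*)$ associated to the invariant representative $A^*$. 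Your alternative compactness extraction of a harmonic function from the $f_n$ would also work but is less direct.
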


It will also be useful for us to use the following well-known equivalence between tail-triviality and asymptotic independence: See e.g.\ \cite[Proposition 10.17]{LP:book} or \cite[Proposition 7.9]{GeorgiiBook}. 
\begin{prop}
\label{lem:tailtriviality}
Let $\Omega$ be a measurable space, let $I$ be a countable set, let $A=(A_i)_{i \in I}$ be an $\Omega^I$-valued random variable with law $\P$, and let $(K_n)_{n\geq0}$ be an increasing sequence of finite subsets of $I$ such that $\bigcup_{n\geq0} K_n = I$. Then $A$ is tail-trivial if and only if for every event 
 $\sA \subseteq \Omega^I$ for which $\P(A \in \sA)>0$ we have that
\begin{equation}
\label{eq:tailtrivlem0} 
\lim_{n\to\infty} \sup \left\{ \left |\P\left(A \in \sA  \text{ and } A \in \sB \right) - \P\left( A \in \sA\right)\P\left( A \in \sB\right)\right| : \sB \in \cF_{I\setminus K_n} \right\} = 0
\end{equation}
and hence that
\begin{equation}
\label{eq:tailtrivlem} 
\lim_{n\to\infty} \sup \left\{ \left |\P\left(A \in \sB  \mid A \in \sA \right) - \P\left( A \in \sB\right)\right| : \sB \in \cF_{I \setminus K_n} \right\} = 0.
\end{equation}
In other words, $A$ is tail-trivial if and only if the total variation distance between the distribution of $(A_i)_{i \in I \setminus K_n}$ and the conditional distribution of $(A_i)_{i \in I \setminus K_n}$ given $\{A \in \sA\}$ converges to zero as $n\to\infty$ for every event $\sA \subseteq \Omega^I$.
\end{prop}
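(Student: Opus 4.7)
The proof is a standard consequence of the reverse martingale (L\'evy downward) convergence theorem. For the easy direction, suppose the asymptotic independence condition \eqref{eq:tailtrivlem0} holds, and let $\sA \in \cT$ be a tail event with $\P(A \in \sA)>0$. Since $\sA$ lies in $\cF_{I\setminus K_n}$ for every $n$, we may take $\sB = \sA$ in \eqref{eq:tailtrivlem0}, yielding $\P(A \in \sA) = \P(A \in \sA)^2$, so that $\P(A \in \sA) \in \{0,1\}$.

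For the main direction, suppose that $A$ is tail-trivial and let $\sA \subseteq \Omega^I$ be any event with $\P(A\in\sA)>0$. Define
\[ f_n := \P\bigl(A\in\sA \mid \cF_{I\setminus K_n}\bigr). \]
Since $K_n$ is increasing and exhausts $I$, the $\sigma$-algebras $\cF_{I\setminus K_n}$ are decreasing with $\bigcap_n \cF_{I\setminus K_n} = \cT$ (this uses finiteness of $K_n$ together with $\bigcup_n K_n = I$, and the fact that the cylinder events generate $\cF$). By the reverse martingale convergence theorem, $f_n \to \P(A\in\sA \mid \cT)$ almost surely and in $L^1$. By tail-triviality the limit equals the constant $\P(A\in\sA)$ almost surely, so $f_n \to \P(A\in\sA)$ in $L^1$.

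For any $\sB \in \cF_{I\setminus K_n}$, the definition of conditional expectation and the $\cF_{I\setminus K_n}$-measurability of $\mathbbm{1}_{\sB}$ give
\[ \P(A \in \sA \text{ and } A \in \sB) \;=\; \E\bigl[\mathbbm{1}_{\sB} f_n\bigr] \quad \text{and} \quad \P(A\in\sA)\P(A\in\sB) \;=\; \E\bigl[\mathbbm{1}_{\sB}\,\P(A\in\sA)\bigr], \]
so that
\[ \bigl|\P(A\in\sA \text{ and } A\in\sB) - \P(A\in\sA)\P(A\in\sB)\bigr| \;\leq\; \E\bigl|f_n - \P(A\in\sA)\bigr|. \]
The right-hand side is independent of $\sB$ and tends to zero by the $L^1$ convergence established above, proving \eqref{eq:tailtrivlem0}. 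Finally, \eqref{eq:tailtrivlem} is obtained from \eqref{eq:tailtrivlem0} simply by dividing through by the positive constant $\P(A\in\sA)$. The main (and essentially only) obstacle is the identification of the limit of $f_n$, which is handled cleanly by the hypothesis of tail-triviality.
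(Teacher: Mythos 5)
The paper does not include its own proof of this proposition, instead citing it as a well-known equivalence (referring to \cite[Proposition 10.17]{LP:book} and \cite[Proposition 7.9]{GeorgiiBook}), so there is no in-paper argument to compare against. Your proof is correct and is the standard one: the identification $\bigcap_n \cF_{I\setminus K_n}=\cT$ is verified properly, the reverse martingale convergence theorem applied to $f_n=\P(A\in\sA\mid\cF_{I\setminus K_n})$ gives $L^1$ convergence to the constant $\P(A\in\sA)$ under tail-triviality, and the bound $|\P(A\in\sA,\,A\in\sB)-\P(A\in\sA)\P(A\in\sB)|\le\E|f_n-\P(A\in\sA)|$ for $\sB\in\cF_{I\setminus K_n}$ is exactly what is needed to make the supremum go to zero uniformly. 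The converse direction via $\sB=\sA\in\cT\subseteq\cF_{I\setminus K_n}$ is also correct. One small remark: the appeal to ``cylinder events generate $\cF$'' is not actually needed for the identity $\bigcap_n\cF_{I\setminus K_n}=\cT$; the two inclusions follow directly from the definitions and the fact that $K_n\uparrow I$ with each $K_n$ finite.
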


Let us note also that if $\Omega_1$, $\Omega_2$ are measurable spaces, $I$ is a countable set, $A$ is an $\Omega_1$ valued random variable and $B^1,\ldots,B^k$ are independent, tail-trivial, $\Omega_2^I$-valued random variables, then $(X_i)_{i\in I} =((A_i,(B_i^j)_{j=1}^k))_{i \in I}$ is a tail trivial $(\Omega_1 \times \Omega_2^k)^I$-valued random variable.

Our first step towards \cref{thm:indist} is the following lemma. Given $\mathbf{u}=(u_1,\ldots,u_k)$, we write $\sW(\mathbf{u})$ for the event that the vertices $u_1,\ldots,u_k$ are all in distinct components of $\F$.

\begin{lemma}
\label{lem:indistlem}
Let $G=(V,E)$ be an infinite, transient, Liouville graph,  let $\F$ be the uniform spanning forest of $G$, and suppose that every component of $\F$ is one-ended almost surely. Let $\mathbf{u}=(u_1,\ldots,u_k)$ be a $k$-tuple of vertices of $G$,
and let $\mathbf{X}=(X^1,\ldots,X^k)$ be a $k$-tuple of independent lazy random walks on $G$, independent of $\F$, such that $\mathbf{X}_0=\mathbf{u}$. 
If $\P(\sW(\mathbf{u}))>0$, then for every 
tail $k$-component property $\sA$ we have that
\vspace{0.2cm}
\begin{equation}
\vspace{0.2cm}
 \P\left((\F,\mathbf{u}) \in \sA \mid  \sW(\mathbf{u}) \right) = \lim_{m\to\infty} \P\left((\F,\mathbf{X}_m) \in \sA\right).
\label{eq:indist3}
\end{equation}
In particular, the right hand limit exists.
\end{lemma}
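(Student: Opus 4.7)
The approach is to couple the two sides of \eqref{eq:indist3} through Wilson's algorithm rooted at infinity, then close the argument using the tail-triviality results established so far.

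First, by independence of $\mathbf{X}$ and $\F$, we have $\P((\F,\mathbf{X}_m)\in\sA)=\E[g(\mathbf{X}_m)]$ where $g(\mathbf{v}):=\P((\F,\mathbf{v})\in\sA)$, and this formula remains valid in any coupling of $(\F,\mathbf{X})$ in which the conditional law of $\F$ given $\mathbf{X}_m$ is still USF. I would exploit this freedom by constructing a ``shifted'' coupling, in which $\F$ is sampled via Wilson's algorithm using the Markov-shifted walks $X^j_{[m,\infty)}$ (which, by the strong Markov property, are lazy random walks from $X^j_m$) as the first $k$ walks, followed by independent walks from the remaining vertices. Under this coupling, on the event $\sW(\mathbf{X}_m)$ the future ray of $X^j_m$ in its $\F$-component is precisely $\LE(X^j_{[m,\infty)})$.

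I would then compare this with the ``unshifted'' Wilson coupling for the left-hand side: build a version $\F'\sim\F$ by running Wilson's algorithm using the full walks $X^j_{[0,\infty)}$ from $u_j$, followed by independent walks from the remaining vertices; on the event $\sW(\mathbf{u})$ the future ray of $u_j$ in $\F'$ is $\LE(X^j_{[0,\infty)})$. The crucial geometric input is transience: for each $j$, the walk $X^j$ visits the finite set $\{X^j_0,\ldots,X^j_{m-1}\}$ only finitely often almost surely, so past this last visit time the rays $\LE(X^j_{[0,\infty)})$ and $\LE(X^j_{[m,\infty)})$ coincide, meaning the two infinite ends agree. Because components of $\F$ are one-ended, a component is determined by its end up to finite symmetric difference, so the tail property of $\sA$ lets us identify the events $\{(\F,\mathbf{X}_m)\in\sA,\,\sW(\mathbf{X}_m)\}$ (in the shifted coupling) and $\{(\F',\mathbf{u})\in\sA,\,\sW(\mathbf{u})\}$ (in the unshifted coupling) modulo a finite-region modification that does not affect $\sA$.

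The main obstacle is to reconcile the different ``remainders'' of the two Wilson constructions (the portions of $\F$ grown from the later independent walks from vertices other than the initial $k$) and to match the conditioning events quantitatively in the $m\to\infty$ limit. I would handle this by invoking tail-triviality of $\F$ (\cref{thm:USFtailtriviality}) via the asymptotic-independence reformulation given by \cref{lem:tailtriviality}: the remainder of $\F$ outside a large finite region near $\mathbf{u}$ is asymptotically independent of the event $\sW(\mathbf{u})$, which lets the remainders of the two Wilson couplings be coupled to agree on tail events. Tail-triviality of $\mathbf{X}$ (i.e.\ the Liouville property) would then govern the asymptotic behaviour of $\mathbf{X}_m$ and ensure that its distribution is asymptotically uncorrelated with the finite-region structure near $\mathbf{u}$, which fixes the conditional normalisation by $\P(\sW(\mathbf{u}))$ implicit on the right-hand side. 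Assembling these, both the existence of $\lim_{m\to\infty}\P((\F,\mathbf{X}_m)\in\sA)$ and its equality with $\P((\F,\mathbf{u})\in\sA\mid\sW(\mathbf{u}))$ follow.
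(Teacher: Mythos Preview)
Your overall architecture matches the paper's: sample the forest via Wilson's algorithm, once using the walks $X^j_{[0,\infty)}$ from $\mathbf u$ and once using the shifted walks $X^j_{[m,\infty)}$ from $\mathbf X_m$; note that by transience the loop-erased rays eventually coincide; and then argue that since $\sA$ is a tail property this discrepancy is harmless. That much is right.

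The genuine gap is the ``reconciliation of remainders'' step, which you identify as the main obstacle but do not actually carry out. A tail $k$-component property depends on the \emph{entire} configuration $\omega$, not only on the $k$ distinguished components, so it is not enough to show that the components of $u_j$ and $X^j_m$ have the same ends; you need the two full forests to agree up to a finite symmetric difference, and you need this symmetric difference to touch each of the $k$ components only finitely. Even if you use the same auxiliary walks $Y^i$ in both Wilson constructions, the stopping times at which the $Y^i$ hit the partial forest differ between the two constructions (because the initial $k$ rays differ near $\mathbf u$), and these differences can propagate throughout the forest. Your appeal to tail-triviality of $\F$ does not repair this: asymptotic independence tells you that the \emph{marginal law} of $\F$ outside a large ball is nearly unconditional, but it does not give you a coupling in which the two Wilson constructions literally agree outside a finite region. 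In fact the paper's proof of this lemma does not use tail-triviality of $\F$ at all.

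What the paper does instead is construct explicit interpolating forests $\F_{m,R}$: first sample $\F$ independently of everything, strip it down to the subforest $\F_R$ consisting of futures of vertices outside $B(u_1,R)$ (which, by one-endedness, eventually avoids any fixed finite set), and then \emph{complete} Wilson's algorithm from this common skeleton using the shifted walks $(X^j_{n+m})_{n\ge 0}$. Each $\F_{m,R}$ has the USF law, and on the high-probability event that the first $m$ steps of each $X^j$ miss $\F_R$, the hitting point of $\F_R$ is the same whether you start at time $0$ or time $m$. This yields the exact identity between $\{(\F_{m,R},\mathbf u)\in\sA\}$ and $\{(\F_{m,R},\mathbf X_m)\in\sA\}$ on a good event, and taking $R\to\infty$ gives the desired equality conditioned on $\sW_0\cap\sW_m$. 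Finally, the Liouville property (tail-triviality of $\mathbf X$, not of $\F$) is what removes the conditioning on $\sW_0$ from the right-hand side; your description of this step as ``fixing the conditional normalisation implicit on the right-hand side'' has the roles reversed---the conditioning is on the left, and the point is that $\P((\fG_m,\mathbf X_m)\in\sA\mid\sW_0)-\P((\fG_m,\mathbf X_m)\in\sA)\to 0$ because $(\fG_m,\mathbf X_m)$ is a function of the tail of $\mathbf X$.
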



Before beginning the proof of this lemma, let us note the following. Suppose that $G$ is an infinite, transient, Liouville graph whose USF is disconnected with positive probability. Then whenever $X$ and $Y$  are independent lazy random walks on $G$, we have by \cite[Theorem 9.2]{BLPS}
that $X$ and $Y$ intersect only finitely often with positive probability. Since the event that both walks are transient and intersect only finitely often is a tail event, we deduce that $X$ and $Y$ intersect only finitely often almost surely. It follows that if $X^1, \ldots, X^k$ are independent lazy random walks on $G$, then
 \begin{equation}
 \label{eq:liouvilleintersections}
 \lim_{m\to\infty} \P\left( (X^i_{n+m})_{n \geq 0} \text{ and } (X^j_{n+m})_{n \geq 0} \text{ intersect for some $1 \leq i < j \leq k$}\right)=0. 
 \end{equation}
 In particular, it follows from \cite[Theorem 9.4]{BLPS} that the USF of $G$ has infinitely many connected components almost surely.

The proof of \cref{lem:indistlem} will also apply the following simple measure-theoretic lemma.

\begin{lemma}
\label{lem:measuretheory}
Let $(X_i)_{i\geq 1}$ and $X$ be random variables defined on a shared probability space $(\Omega,\P)$ and taking values in a locally compact  Hausdorff space $\mathbb{X}$. Let $(B_i)_{i\geq 1} \subseteq \Omega$ and $B_i \subseteq \Omega$ be measurable with $\P(B)>0$. Suppose further that the following hold:
\begin{enumerate}
\item $X_i$ and $X$ have the same distribution for every $i\geq 1$.
\item $X_i$ converges to $X$ in probability as $i\to\infty$.
\item $\P(B_i \symdif B) \to 0$ as $i\to \infty$.
\end{enumerate}
Then $\P( X \in A \mid B) = \lim_{i\to\infty}\P( X_i \in A \mid B_i )$ for every Borel set $A \subseteq \mathbb{X}$.
\end{lemma}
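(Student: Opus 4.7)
The plan is to express the conditional probability as a ratio and reduce everything to the statement $\P(X_i \in A, B) \to \P(X \in A, B)$, which itself will follow from the claim that $\mathbf{1}_{X_i \in A} \to \mathbf{1}_{X \in A}$ in probability for every Borel set $A$. This latter claim is the heart of the argument, and the key device is to combine convergence in probability with the equal-distribution hypothesis.

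Writing $\P(X_i \in A \mid B_i) = \P(X_i \in A, B_i)/\P(B_i)$, the denominator converges to $\P(B)>0$ because $|\P(B_i) - \P(B)| \leq \P(B_i \symdif B) \to 0$, and the same bound gives $|\P(X_i \in A, B_i) - \P(X_i \in A, B)| \leq \P(B_i \symdif B) \to 0$. It therefore suffices to show $\P(X_i \in A, B) \to \P(X \in A, B)$, and for this it is enough to prove $p_i(A) := \P(X_i \in A,\ X \notin A) \to 0$ for every Borel $A$, since $|\P(X_i \in A, B) - \P(X \in A, B)| \leq \P(\mathbf{1}_{X_i \in A} \neq \mathbf{1}_{X \in A}) = 2 p_i(A)$ (the factor of $2$ uses equality in distribution to conclude $\P(X_i \notin A, X \in A) = p_i(A)$).

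Fix a compatible metric $d$ on $\mathbb{X}$ and first suppose $A$ is open. Setting $A_\eps := \{x \in A : d(x, A^c) > \eps\}$, the set $A_\eps$ is open and $A_\eps \uparrow A$ as $\eps \downarrow 0$ since $A$ is open. If $X_i \in A_\eps$ and $X \in A^c$, then $d(X_i, X) \geq d(X_i, A^c) > \eps$, so using equality of distributions in the last step,
$p_i(A) \leq \P(d(X_i, X) > \eps) + \P(X_i \in A \setminus A_\eps) = \P(d(X_i, X) > \eps) + \P(X \in A \setminus A_\eps)$.
Letting $i \to \infty$ using convergence in probability, and then $\eps \downarrow 0$, yields $p_i(A) \to 0$.

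To pass to arbitrary Borel sets I apply Dynkin's $\pi$-$\lambda$ theorem to $\mathcal{C} := \{A \in \mathcal{B}(\mathbb{X}) : p_i(A) \to 0\}$. The equality of distributions gives $p_i(A^c) = p_i(A)$, so $\mathcal{C}$ is closed under complements; and if $A_n \uparrow A$ with $A_n \in \mathcal{C}$, splitting on whether $X_i \in A_n$ or $X_i \in A \setminus A_n$ and invoking equal distribution yields $p_i(A) \leq p_i(A_n) + \P(X \in A \setminus A_n)$, which shows $A \in \mathcal{C}$ upon letting $i \to \infty$ and then $n \to \infty$. Hence $\mathcal{C}$ is a $\lambda$-system containing the $\pi$-system of open sets, so $\mathcal{C} = \mathcal{B}(\mathbb{X})$. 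I expect the extension from open to arbitrary Borel sets to be the main subtlety: without the equal-distribution hypothesis the convergence $\mathbf{1}_{X_i \in A} \to \mathbf{1}_{X \in A}$ in probability typically fails unless $A$ is a continuity set for the law of $X$, and it is precisely the symmetry $p_i(A) = p_i(A^c)$ that lets the Dynkin argument close.
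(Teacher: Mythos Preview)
Your approach is correct and genuinely different from the paper's. The paper approximates $\mathbbm{1}(X\in A)$ in $L^1$ by a continuous compactly supported function $f_\eps$ (quoting density of $C_c$ in $L^1$), and then bounds $|\P(X\in A, B)-\P(X_i\in A, B_i)|$ by a triangle inequality in which the key term $\E|f_\eps(X)-f_\eps(X_i)|$ tends to zero by bounded convergence. Your route --- showing $\mathbbm{1}(X_i\in A)\to\mathbbm{1}(X\in A)$ in probability via a $\pi$--$\lambda$ argument starting from open sets --- is more elementary and makes the role of the equal-distribution hypothesis especially transparent through the symmetry $p_i(A)=p_i(A^c)$.

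One small technical gap: closure under complements together with closure under increasing unions does \emph{not} by itself make $\mathcal{C}$ a $\lambda$-system (e.g.\ on $\Omega=\{1,2,3\}$ the collection $\{\emptyset,\{1\},\{2\},\{1,3\},\{2,3\},\Omega\}$ has both properties but is not a $\lambda$-system). You need either countable disjoint unions or proper differences. The latter follows by the same device you already use: for $A\subseteq B$ in $\mathcal{C}$, splitting $\{X\notin B\setminus A\}$ as $\{X\notin B\}\cup\{X\in A\}$ gives $p_i(B\setminus A)\leq p_i(B)+p_i(A)\to 0$.

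A second remark: you ``fix a compatible metric'' on $\mathbb{X}$, but locally compact Hausdorff does not imply metrizable. This is not a defect of your argument relative to the paper's --- the very notion of convergence in probability, the paper's use of $f_\eps(X_i)\to f_\eps(X)$, and the regularity of the law of $X$ needed for the $C_c$-density argument all tacitly require something like second countability --- and in the paper's only application the space is $\{0,1\}^E$ with $E$ countable, hence compact metrizable.
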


\begin{proof}

By \cite[Theorem 3.14]{MR924157}, for every $\eps>0$ there exists a continuous function $f_\eps:\mathbb{X}\to\R$ such that 
$
\E\left[|f_\eps(X)-\mathbbm{1}(X\in A)|\right] \leq \eps$.
We have by the triangle inequality that
\begin{align*}
|\P(X\in A,B) - \P(X_i\in A, B_i)|&\leq 
\E\left[ |\mathbbm{1}(X\in A) \mathbbm{1}(B)- \mathbbm{1}(X_i\in A) \mathbbm{1}(B_i)|\right]\\&\leq \E\left[ |\mathbbm{1}(X\in A) - f_\eps(X)| \mathbbm{1}(B)\right] + \E\left[ f_\eps(X)|\mathbbm{1}(B) -\mathbbm{1}(B_i)|\right]\\
&\hspace{1cm}+\E\left[ |f_\eps(X)-f_\eps(X_i)| \mathbbm{1}(B_i)\right]
+\E\left[ |f_\eps(X_i)-\mathbbm{1}(X_i\in A)| \mathbbm{1}(B_i)\right]\\
&\leq 2 
\E\left[ |\mathbbm{1}(X\in A) - f_\eps(X)|\right] + \E\left[ f_\eps(X)|\mathbbm{1}(B) -\mathbbm{1}(B_i)|\right]\\
&\hspace{1cm}+\E\left[ |f_\eps(X)-f_\eps(X_i)| \right],
\end{align*}
where we used the fact that $X$ and $X_i$ have the same distribution in the third inequality. 
Applying the dominated convergence theorem we deduce that
\[
\limsup_{i\to\infty}
|\P(X\in A,B) - \P(X_i\in A, B_i)| \leq 2 \E\left[ |\mathbbm{1}(X\in A) - f_\eps(X)|\right] \leq 2\eps.
\]
The claim follows since $\eps>0$ was arbitrary.
\end{proof}

\begin{proof}[Proof of \cref{lem:indistlem}]

Let $(v_i)_{i\geq0}$ be an enumeration of the vertices of $G$ and let $( Y^{i,m} )_{i\geq 0, m\geq 0}$ be a collection of independent lazy random walks, independent of $\mathbf{X}$ and of $\F$, such that $Y^{i,m}$ is started at $v_i$ for every $i\geq 0$ and $m \geq 0$.
Let $\fG$ be a  uniform spanning forest of $G$ sampled using Wilson's algorithm, beginning with the walks $X^1,\ldots, X^k$, and then using the walks $Y^{0,0},Y^{1,0},\ldots$.  Similarly, for each $m\geq 1$, let $\fG_m$ be a  uniform spanning forest of $G$ sampled using Wilson's algorithm, beginning with the walks $(X^1_{n+m})_{n \geq 0},\ldots, (X^k_{n+m})_{n\geq 0}$ and then using the walks $Y^{0,m},Y^{1,m},\ldots$. 
We clearly have that
$\left( \fG_m,\, \mathbf{X}_m\right)$ and $\left(\F,\, \mathbf{X}_m \right)$ have the same distribution 
for every $m\geq0$. 
Unlike $\F$, the forests $\fG_m$ are \emph{not} independent of the random walks $\mathbf{X}$.


For each $m \geq 0$, we will define a sequence of forests $\F_{m,R}$ which interpolate between $\F$ and $\fG_m$. 
The \textbf{future} of a vertex $v$ in $\F$, denoted $\operatorname{fut}_\F(v)$, is defined to be the set of vertices on the unique infinite simple path starting at $v$ in $\F$, including $v$ itself. The \textbf{past} of a vertex $v$ in $\F$ is defined to be the set of all vertices $u$ such that $v$ is in the future of $u$. 
For each integer $R\geq 0$, let $\F_R$ be the subgraph of $\F$ induced by the set
\[\bigcup\left\{ \operatorname{fut}_\F(v) : v \in V \setminus B(u_1,R)\right\}, \]
%
where $B(u_1,R)$ denotes the graph-distance ball of radius $R$ around $u_1$ in $G$.
For each $R \geq r \geq 0$, let $\sC_{r,R}$ be the event that $\F_R$ does not intersect the set $\bigcup_{i=1}^k B(u_i,r)$. A vertex $v$ of $G$ is contained in the forest $\F_R$ if and only if its past intersects $V \setminus B(u_1,R)$. Since every component of $\F$ is  one-ended almost surely, the past of each vertex of $G$ is finite almost surely. We deduce that $\bigcap_{R\geq0} \F_R=\emptyset$ almost surely and hence that
$\lim_{R\to\infty}\P\left(\sC_{r,R}\right)=1$
for every $r\geq 0$.

Now, for each $m\geq 0$ and $R \geq 0$, we define a forest $\F_{m,R} = \bigcup_{i\geq0} \F_{m,R}^i$, where the forests $\F_{m,R}^i$ are defined recursively as follows. Let $\F_{m,R}^0=\F_R$.
For each $1 \leq i \leq k$, given $\fF^{i-1}_{m,R}$,  stop the random walk $(X^i_{n+m})_{n\geq 0}$ when it first visits the set of vertices already included in $\fF^{i-1}_{m,R}$. Take the loop-erasure of this stopped path, and
and let $\F_{m,R}^{i}$ be the union of $\F_{m,R}^i$ with this loop-erased path.  Similarly, if $i > k$, stop the random walk $(Y^{i-k-1,m}_{n})_{n\geq 0}$ when it first visits the set of vertices already included in $\fF^{i-1}_{m,R}$, take the loop-erasure of this stopped path, and
and let $\F_{m,R}^{i}$ be the union of $\F_{m,R}^{i-1}$ with this loop-erased path. We refer to this procedure as \textbf{completing the run} of Wilson's algorithm. It follows from \cite[Lemma 4.1]{HutNach2016a} that each of the forests $\F_{m,R}$ is distributed as the uniform spanning forest of $G$. (Indeed, we can think of the forests $\F_{m,R}$ as being sampled using  Wilson's algorithm, except that we are choosing which vertices to start our random walks from using a well-ordering of the vertex set  that is not an enumeration.)

It is not hard to see that
$\F_{m,R}$ converges to $\fG_m$
almost surely as $R \to \infty$ (with respect to the product topology on $\{0,1\}^E$). Indeed, it follows from the transience of $G$ that the loop-erasure of $(X^1_{n+m})_{n\geq 0}$ stopped when it first hits $\F_R$ converges almost surely to the loop-erasure of the unstopped walk as $R\to\infty$, and applying a similar argument inductively we deduce that $\F^i_{m,R}$ converges almost surely as $R\to\infty$ to the forest generated by the first $i$ steps of the application of Wilson's algorithm used to generate $\fG_m$; the claim that $\F_{m,R}$ converges to $\fG_m$
almost surely follows by taking $i$ to infinity. 

For each $m\geq 0$ and $R\geq0$, let $\sW_m$ be the event that the vertices $\{X^i_m : 1\leq i \leq k\}$ are all in different components of $\fG_{m}$, and let 
 $\sB_{m,R}$ to be the event that the vertices $\{ X^i_m : 1 \leq i \leq k\}$ are all in different components of the forest $\F_{m,R}\setminus \F_R$. Thus, the event $\sB_{m,R}$ occurs if and only if for each $1 \leq i \leq k$ the walk $(X^i_{n+m})_{n\geq0}$ first hits the set of vertices included in $\F^{i-1}_{m,R}$ at a vertex of $\F_R$.
It is easily seen that $\lim_{R\to\infty}\P(\sW_m \symdif \sB_{m,R})=0$ for each $m\geq 0$.

  Now, for each $m\geq 0,$ $R\geq 0$ and $1 \leq \ell \leq k$, let $\tau_\ell(m,R)$ be the first time after time $m$ that the walk $X^\ell$ hits the set of vertices included in  $\F_R$. Write $\mathbf{\tau}(m,R)=(\tau_\ell(m,R))_{\ell=1}^k$ and $\mathbf{X}_{\mathbf{\tau}(m,R)}=(X^\ell_{\tau_\ell(m,R)})_{\ell=1}^k$. 
  On the event $\sB_{m,R}$, the vertex $X^i_m$ is connected in $\F_{m,R}$ to the vertex $X^i_{\tau_i(m,R)}$. Since $\sA$ is a tail property,  we deduce that
  \[
\sB_{m,R} \cap \Bigl\{
(\F_{m,R},\mathbf{X}_m) \in \sA 
\Bigr\} = 
\sB_{m,R} \cap \Bigl\{
\bigl(\F_{R},\mathbf{X}_{\mathbf{\tau}(m,R)}\bigr) \in \sA 
\Bigr\}\]
up to a null set. 
%
%
Moreover, observe that $\tau_\ell(m,R)=\tau_\ell(0,R)$ for every $1 \leq \ell \leq k$ and every $m \leq r$ on the event $\sC_{r,R}$, so that if $r\geq m$ then 
  \begin{multline*}
\sC_{r,R}\cap \sB_{0,R}\cap\sB_{m,R} \cap \Bigl\{
(\F_{m,R},\mathbf{X}_m) \in \sA 
\Bigr\}\\ = 
\sC_{r,R}\cap \sB_{0,R}\cap \sB_{m,R} \cap \Bigl\{
\bigl(\F_{R},\mathbf{X}_{\mathbf{\tau}(m,R)}\bigr) \in \sA 
\Bigr\}\\
 = \sC_{r,R}\cap \sB_{0,R}\cap\sB_{m,R} \cap \Bigl\{
(\F_{m,R},\mathbf{u}) \in \sA 
\Bigr\}
\end{multline*}
up to null sets, and taking probabilities we have that
\begin{equation*}
\P\left( 
(\F_{m,R},\mathbf{u}) \in \sA 
 \mid \sC_{r,R}\cap \sB_{0,R}\cap\sB_{m,R} \right)
=
\P\left( 
(\F_{m,R},\mathbf{X}_m) \in \sA 
 \mid \sC_{r,R}\cap \sB_{0,R}\cap\sB_{m,R} \right).
\end{equation*}
Using \cref{lem:measuretheory} to take the limit as $R \to \infty$ on both sides, we obtain that
\begin{equation*}
\P((\fG_0,\mathbf{u}) \in \sA \mid  \sW_0,\, \sW_m) 
= \P((\fG_m,\mathbf{X}_m) \in \sA \mid  \sW_0,\, \sW_m).
\end{equation*}
for every $m \geq 0$.
The event $\sW_m$ is contained in the event that none of the random walks $(X^i_{n+m})_{n\geq0}$ intersect each other. If $k=1$, then $\sW_m$ trivially has probability one for every $m \geq 0$. Otherwise, $k>1$ and our assumption that $\P(\sW)>0$ implies that $\F$ is disconnected with positive probability, so that the event $\sW_m$ has probability converging to $1$ as $m\to\infty$ by \eqref{eq:liouvilleintersections}. In either case, we deduce that
\begin{equation}
\P((\fG_0,\mathbf{u}) \in \sA \mid  \sW_0) = \lim_{m\to\infty} 
 \P((\fG_m,\mathbf{X}_m) \in \sA \mid  \sW_0). 
\label{eq:indist1}
\end{equation}
%
%
%
In particular, the right-hand limit exists.

Since $G$ is Liouville, the sequence of random variables $((\mathbf{X}_{n+m})_{n\geq 0})_{m\geq 0}$ is tail-trivial. 
Moreover, for each $m\geq 0$ the forest $\fG_m$ is conditionally independent  given the random variables $(\mathbf{X}_{n+m})_{n\geq 0}$ of the walks $(\mathbf{X}_n)_{n=0}^m$ and the forests $(\fG_i)_{i=1}^{m-1}$, and it is easily deduced that the sequence of random variables
$\left(\fG_m,\, (\mathbf{X}_{n+m})_{n\geq0}  \right)_{m \geq0}$
is also tail-trivial. 
Applying \cref{lem:tailtriviality} we deduce that
\begin{equation}
\lim_{m\to\infty}\left|
 \P((\fG_m,\mathbf{X}_m) \in \sA \mid  \sW_0)-\P((\fG_m,\mathbf{X}_m) \in \sA)\right|  =0.
\label{eq:indist2}
\end{equation}
The claim follows by combining \eqref{eq:indist1} and \eqref{eq:indist2} and using that $(\F,\mathbf{X}_m)$ and $(\fG_m,\mathbf{X}_m)$ are equidistributed.
\end{proof}

\begin{corollary}
\label{lem:constantprobs}
Let $G=(V,E)$ be an infinite, transient, Liouville graph,  let $\F$ be the uniform spanning forest of $G$, and suppose that every component of $\F$ is one-ended almost surely. 
Then for each tail $k$-component property $\sA$ there exists a constant $P(\sA)$ such that 
\vspace{0.2cm}
\begin{equation}
\vspace{0.2cm}
 \P\left((\F,\mathbf{u}) \in \sA \mid  \sW(\mathbf{u}) \right) = P(\sA).
\label{eq:indist1000}
\end{equation}
for every $\mathbf{u}\in V^k$ with $\P(\sW(\mathbf{u}))>0$. 
\end{corollary}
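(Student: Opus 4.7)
The plan is to extend the function $\mathbf{u} \mapsto h(\mathbf{u}) := \P((\F, \mathbf{u}) \in \sA \mid \sW(\mathbf{u}))$ from $\{\mathbf{u}: w(\mathbf{u})>0\}$ (writing $w(\mathbf{u}) := \P(\sW(\mathbf{u}))$) to a bounded harmonic function $H$ on the entire state space $V^k$ of the product chain $\mathbf{X} = (X^1, \ldots, X^k)$ of $k$ independent lazy random walks on $G$, and then to conclude that $H$ is constant by invoking the Liouville property of this chain. Write $g_m(\mathbf{u}) := \P((\F,\mathbf{X}_m^\mathbf{u}) \in \sA) = (P^m g)(\mathbf{u})$, where $g(\mathbf{v}) := \P((\F,\mathbf{v}) \in \sA)$, so that \cref{lem:indistlem} states $\lim_m g_m(\mathbf{u}) = h(\mathbf{u})$ whenever $w(\mathbf{u})>0$.

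The first and hardest step is to upgrade this pointwise convergence to all of $V^k$. Inspection of the proof of \cref{lem:indistlem} shows that for every starting point $\mathbf{u}$,
\[ \P_{\mathbf{u}}(w(\mathbf{X}_m)=0) \leq \P(\sW_m^c) \longrightarrow 0 \quad \text{as } m \to \infty, \]
since $\sW_m^c$ is contained in the event that the walks $(X_{n+m}^i)_{n \geq 0}$ have a pairwise intersection, and \eqref{eq:liouvilleintersections} applies regardless of the starting distribution. Setting $\overline H := \limsup_m g_m$ and $\underline H := \liminf_m g_m$ and applying Fatou (respectively reverse Fatou, valid because $0 \leq g_m \leq 1$) to $g_{m+1} = P g_m$ yields $\underline H \geq P\underline H$ and $\overline H \leq P\overline H$, so $\phi := \overline H - \underline H$ satisfies $0 \leq \phi \leq 1$, $\phi \leq P\phi$, and $\phi \equiv 0$ on $\{w>0\}$ by \cref{lem:indistlem}. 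Iterating $\phi \leq P^n\phi$ and splitting the expectation according to whether $\mathbf{X}_n \in \{w > 0\}$ gives
\[ \phi(\mathbf{u}) \leq (P^n \phi)(\mathbf{u}) = \E_{\mathbf{u}}\!\bigl[\phi(\mathbf{X}_n)\mathbbm{1}\{w(\mathbf{X}_n) = 0\}\bigr] \leq \P_\mathbf{u}(w(\mathbf{X}_n) = 0) \longrightarrow 0, \]
so $\phi \equiv 0$ on $V^k$ and $H(\mathbf{u}) := \lim_m g_m(\mathbf{u})$ exists for every $\mathbf{u}$, coinciding with $h$ on $\{w>0\}$.

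Dominated convergence then gives $PH(\mathbf{u}) = \lim_m (Pg_m)(\mathbf{u}) = \lim_m g_{m+1}(\mathbf{u}) = H(\mathbf{u})$, so $H$ is a bounded $P$-harmonic function on $V^k$. By the discussion preceding \cref{lem:indistlem}, the product chain $\mathbf{X}$ is tail-trivial; and for a countable Markov chain tail-triviality implies the Liouville property, since if $H$ is bounded $P$-harmonic then the bounded martingale $H(\mathbf{X}_n)$ converges almost surely to a tail-measurable and therefore a.s.\ constant limit, forcing $H$ itself to be constant. Taking $P(\sA)$ to be this constant value completes the proof. The main obstacle is precisely the asymmetry between \cref{lem:indistlem}, which produces the limit only on $\{w>0\}$, and the harmonic-function / Liouville machinery, which needs a function defined on all of $V^k$; the Fatou sandwich above, powered by the walks' asymptotic confinement to $\{w>0\}$, is what bridges the gap.
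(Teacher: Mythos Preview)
Your argument is correct, but it takes a substantially different route from the paper's own proof.

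The paper's proof is a three-line coupling argument. Given $\mathbf{u}$ and $\mathbf{u}'$ with $\P(\sW(\mathbf{u})),\P(\sW(\mathbf{u}'))>0$, let $M=\max_i d(u_i,u_i')$ and let $\sM=\{\mathbf{X}_M=\mathbf{u}'\}$, which has positive probability by laziness and connectedness. By \cref{lem:indistlem} and tail-triviality of $\mathbf{X}$ (via \cref{lem:tailtriviality}), conditioning on the finite-time event $\sM$ does not affect the limit $\lim_m \P((\F,\mathbf{X}_m)\in\sA)$; but by the Markov property together with \cref{lem:indistlem} applied at $\mathbf{u}'$, the \emph{conditioned} limit equals $h(\mathbf{u}')$. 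Hence $h(\mathbf{u})=h(\mathbf{u}')$. No extension of $h$ off $\{w>0\}$ is needed.

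Your approach instead builds a global bounded harmonic function: you show $g_m=P^mg$ converges on all of $V^k$ (not just on $\{w>0\}$), identify the limit $H$ as $P$-harmonic, and then invoke the Liouville property of the product chain. The genuine extra work in your proof is the Fatou sandwich establishing convergence on the set $\{w=0\}$, which relies on $\P_{\mathbf{u}}(w(\mathbf{X}_m)=0)\to 0$; this in turn uses (implicitly, via Wilson's algorithm) that $w(\mathbf{v})=0$ forces walks from $\mathbf{v}$ to intersect almost surely, together with \eqref{eq:liouvilleintersections}. This step is sound, but it is precisely what the paper's argument sidesteps by never leaving $\{w>0\}$. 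In exchange, your proof gives a more structural picture---$P(\sA)$ is literally the constant value of a bounded harmonic function on $V^k$---while the paper's is shorter and avoids the detour through $\{w=0\}$ entirely.
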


\begin{proof}
Let $\sA$ be a tail $k$-component property.
Let $\mathbf{u}=(u_1,\ldots,u_k)$ and $\mathbf{u}'=(u_1',\ldots,u'_k)$ be two $k$-tuples of vertices of $G$ such that $\P(\sW(\mathbf{u})),\P(\sW(\mathbf{u}'))>0$. Let $\mathbf{X}=(X^1,\ldots,X^k)$ be a $k$-tuple of independent lazy random walks, independent of $\F$, with $\mathbf{X}_0=\mathbf{u}$. Let $M=\max d(u_i,u_i')$, and let $\sM$ be the event that $\mathbf{X}_M=\mathbf{u}'$, which is easily seen to have positive probability. It follows by \cref{lem:indistlem},  the Liouville property, \cref{lem:tailtriviality}, and the Markov property of the lazy random walk that
\begin{align*}
\P\left( (\F,\mathbf{u})\in \sA \mid \sW(\mathbf{u})\right)
&=\lim_{m\to\infty} \P\left((\F,\mathbf{X}_m)\in \sA \right)=\lim_{m\to\infty} \P\left((\F,\mathbf{X}_m)\in \sA \mid \sM \right)\\
 &= \P\left( (\F,\mathbf{u}')\in \sA \mid \sW(\mathbf{u}')\right).
\end{align*}
as claimed.
\end{proof}

It remains to prove that $P(\sA)\in \{0,1\}$ for every tail $k$-component property $\sA$.

Our next goal is to establish a conditional version of \cref{lem:indistlem}.
Let $r\geq 1$. By a slight abuse of notation, write $\F \cap B(u_1,r)$ to denote the set of edges of $\F$ that have both endpoints in the graph-distance ball $B(u_1,r)$ of radius $r$ around $u_1$, and $B(u_1,r) \setminus \F$ to denote the set of edges that have both endpoints in the ball $B(u_1,r)$ and are not contained in $\F$. 
For each $r\geq 1$, let $\cG_r$ be the $\sigma$-algebra generated by $\F \cap B(u_1,r)$. 
Similarly, for each $R\geq 1$, let $\cG^R$ be the $\sigma$-algebra generated by the restriction of $\F$ to the \emph{complement} of the ball $B(u_1,R)$.

\begin{lemma}
\label{lem:indistlem2}
Let $G=(V,E)$ be an infinite, one-ended, transient, Liouville graph,  let $\F$ be the uniform spanning forest of $G$, and suppose that every component of $\F$ is one-ended almost surely. Let $\mathbf{u}=(u_1,\ldots,u_k)$ be a $k$-tuple of vertices of $G$,
and let $\mathbf{X}=(X^1,\ldots,X^k)$ be a $k$-tuple of independent lazy random walks on $G$, independent of $\F$, such that $\mathbf{X}_0=\mathbf{u}$. 
If $\P(\sW(\mathbf{u}))>0$, then for every 
tail $k$-component property $\sA$ we have that
\vspace{0.2cm}
\begin{equation}
\vspace{0.2cm}
 \P\left((\F,\mathbf{u}) \in \sA \mid \cG_r,\,  \sW(\mathbf{u}) \right) = \lim_{m\to\infty} \P\left((\F,\mathbf{X}_m) \in \sA \mid \cG_r \right)  \quad \text{ a.s.}
\label{eq:indist3}
\end{equation}
In particular, the right hand limit exists almost surely.
\end{lemma}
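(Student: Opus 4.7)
The approach is to adapt the proof of \cref{lem:indistlem} by intersecting the key set equality with two different types of $\cG_r$-conditioning events---one measurable with respect to $\F_{0,R} \cap B(u_1, r)$ and one with respect to $\F_{m,R} \cap B(u_1, r)$---in order to separately evaluate the two sides of the desired identity. Throughout, I would couple $\F = \fG_0$ so that $\cG_r = \sigma(\fG_0 \cap B(u_1, r))$, and let $L := \P((\F, \mathbf{u}) \in \sA \mid \sW)$; by \cref{lem:indistlem} together with the equidistribution $\fG_m \eqd \F$, this equals $\lim_{m \to \infty} \P((\fG_m, \mathbf{X}_m) \in \sA)$.

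First, I would replicate the construction of the interpolating forests $\F_{m,R}$ and the set equality from the proof of \cref{lem:indistlem}. For each configuration $s$ of $B(u_1, r)$ with $\P(\cG_r = s, \sW) > 0$, intersect this set equality with $\{\F_{0, R} \cap B(u_1, r) = s\}$ and take probabilities; passing to the limit $R \to \infty$ via \cref{lem:measuretheory} yields
\[
\P(\sW, \sW_m, \cG_r = s, (\fG_m, \mathbf{X}_m) \in \sA) = \P(\sW, \sW_m, \cG_r = s, (\F, \mathbf{u}) \in \sA).
\]
Letting $m \to \infty$ while using $\P(\sW_m) \to 1$ from \eqref{eq:liouvilleintersections}, and applying the asymptotic-independence form of \cref{lem:tailtriviality} to split the $\cF_{\{0\}}$-event $\{\sW, \cG_r = s\}$ from the $\cF_{\{m\}}$-event $\{(\fG_m, \mathbf{X}_m) \in \sA\}$, one deduces $\P((\F, \mathbf{u}) \in \sA \mid \cG_r = s, \sW) = L$; this shows the LHS of the claim equals $L$ almost surely.

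Second, I would repeat the same procedure but intersecting with $\{\F_{m, R} \cap B(u_1, r) = s\}$ in place of $\{\F_{0, R} \cap B(u_1, r) = s\}$. The $R \to \infty$ limit now yields
\[
\P(\sW, \sW_m, \fG_m \cap B(u_1, r) = s, (\fG_m, \mathbf{X}_m) \in \sA) = \P(\sW, \sW_m, \fG_m \cap B(u_1, r) = s, (\F, \mathbf{u}) \in \sA).
\]
Here, the asymptotic-independence argument peels off the $\cF_{\{0\}}$-factor $\sW$ on the LHS and the $\cF_{\{0\}}$-factor $\{\sW, (\F, \mathbf{u}) \in \sA\}$ on the RHS; combined with $\P(\sW_m) \to 1$ and $\fG_m \eqd \F$, this produces $\lim_{m \to \infty} \P((\fG_m, \mathbf{X}_m) \in \sA \mid \fG_m \cap B(u_1, r) = s) = L$. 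The joint equidistribution $(\F, \mathbf{X}_m, \F \cap B(u_1, r)) \eqd (\fG_m, \mathbf{X}_m, \fG_m \cap B(u_1, r))$ translates this to $\lim_{m \to \infty} \P((\F, \mathbf{X}_m) \in \sA \mid \cG_r = s) = L$, which shows the RHS of the claim exists a.s.\ and also equals $L$. Since both sides agree with the same constant $L$, the identity holds a.s.

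The main obstacle is the second intersection: the conditioning event $\{\fG_m \cap B(u_1, r) = s\}$ lies in $\cF_{\{m\}}$ rather than $\cF_{\{0\}}$, so it cannot be separated from $\{(\fG_m, \mathbf{X}_m) \in \sA\}$ via asymptotic independence in the sequence $((\fG_m, (\mathbf{X}_{n+m})_{n \geq 0}))_{m \geq 0}$. The resolution is to apply asymptotic independence to the $\cF_{\{0\}}$-factor $\sW$ instead and then return to $\cG_r$-conditioning via equidistribution. Using two distinct intersection events---one local to $\F_{0, R}$ and one local to $\F_{m, R}$---is the essential novelty that lets us extract information about the two sides of the claim separately.
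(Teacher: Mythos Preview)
Your argument is correct, and it takes a genuinely different route from the paper. The paper proves \cref{lem:indistlem2} by invoking the spatial Markov property: conditioning on $\F\cap B(u_1,r)=A$, one passes to the graph $\widehat G$ obtained by contracting $A$ and deleting the complementary edges in the ball, verifies (via the equality of conditional post-last-exit laws) that $\widehat G$ is again Liouville, and then applies \cref{lem:indistlem} as a black box to $\widehat G$ before transferring back. Your approach instead stays on $G$ and revisits the interpolating-forest machinery from the proof of \cref{lem:indistlem}, intersecting the key set equality with the local event $\{\F_{0,R}\cap B(u_1,r)=s\}$ (to handle the left-hand side) and $\{\F_{m,R}\cap B(u_1,r)=s\}$ (to handle the right-hand side), and then exploiting the tail triviality of $\bigl(\fG_m,(\mathbf{X}_{n+m})_{n\ge 0}\bigr)_{m\ge 0}$ via \cref{lem:tailtriviality} in two different ways.

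What this buys you: you avoid the spatial Markov property and the nontrivial verification that $\widehat G$ is Liouville, and in particular you never use the hypothesis that $G$ itself is one-ended (which the paper needs to ensure $\widehat G$ is connected). Moreover, your argument directly shows that both sides of \eqref{eq:indist3} equal the unconditional constant $L=P(\sA)$, which is exactly the content of \eqref{eq:indistclaim1} in the proof of \cref{thm:indist}; so you are in effect proving the lemma and the first half of the main theorem's endgame simultaneously. What the paper's approach buys: it is more modular, reusing \cref{lem:indistlem} verbatim on $\widehat G$, whereas your argument reopens the internals of that proof. One small point of care: your phrase ``couple $\F=\fG_0$'' should be read only for the left-hand side (where $\mathbf{X}$ does not appear); for the right-hand side you correctly rely on the equidistribution $(\fG_m,\mathbf{X}_m)\eqd(\F,\mathbf{X}_m)$ rather than any coupling, since $\fG_0$ is not independent of $\mathbf{X}_m$ for $m\ge 1$.
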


Before beginning the proof, let us recall that the Liouville property can equivalently be defined in terms of the triviality of \emph{invariant} events, rather than tail events. The \textbf{invariant $\sigma$-algebra} of $V^\N$ is defined to be the set of all measurable sets $\sI \subseteq V^\N$ such that $(v_i)_{i\geq 1}\in \sI$ if and only if $(v_{i+1})_{i\geq 1}\in \sI$. A graph is Liouville if and only if the lazy random walk has probability either zero or one of belonging to any set in the invariant $\sigma$-algebra \cite[Corollary 14.13]{LP:book}. 

\setstretch{1.25}

\begin{proof}

Write $\sW=\sW(\mathbf{u})$. 
Let $A \subset E$ be such that $\F \cap B(u_1,r)=A$ with positive probability, and let $B$ be the set of edges that have both endpoints in $B(u_1,r)$ but are not in $A$.  
Let $\widehat G = (\widehat V, \widehat E)$ be the graph obtained from $G$ by contracting every edge in $A$ and deleting every edge in $B$. Note that, since $G$ is one-ended and every component of $\F$ is almost surely infinite, every two vertices of $G$ are almost surely connected by a path in $G$ that does not use any edges of $B(u_1,r)\setminus \F$, and it follows that $\widehat G$ is connected. 
  %
  Let $\widehat{\F}$ a wired uniform spanning forest of $\widehat G$ independent of $\F$. 
   By the spatial Markov property of the uniform spanning forest (see e.g.\ \cite[Section 2.2.1]{HutNach2016b}), the conditional distribution of $\F$ given $\F \cap B(u_1,r)=A$ coincides with the distribution of $\widehat \F \cup A$ (after appropriate identification of edges).

%
Let $\pi:  V \to \widehat V$ be the function sending each vertex of $V$ to its image following the contraction, and for each $v \in \widehat V$ let $\pi^{-1}(v)$ be an arbitrarily chosen vertex of $G$ such that $\pi(\pi^{-1}(v))=v$. Let $\pi^{-1}(\mathbf{v}) =(\pi^{-1}(v_1),\ldots,\pi^{-1}(v_k))$ for each $\mathbf{v}\in V^k$ and define
\[\widehat{\hspace{0cm}\sA} = \left\{(\omega,\mathbf{v}) \in  \{0,1\}^{\widehat E} \times \widehat V^k : \left(\omega \cup A , \pi^{-1}(\mathbf{v})\right) \in \sA  \right\},\]
which does not depend on the arbitrary choices used to define $\pi^{-1}$ since $\sA$ is a multicomponent property. It is easily verified that $\widehat{\hspace{0cm}\sA}$ is a tail $k$-component property, and that
\begin{equation}
\P\left((\F,\mathbf{u}) \in \sA \mid \F \cap B(u_1,r)=A,\, \sW\right) = \P\left((\widehat {\F} , \pi(\mathbf{u})) \in \widehat{\hspace{0cm}\sA} \mid \widehat{\hspace{0cm}\sW}\right),
\end{equation}
where $\widehat{\hspace{0cm}\sW}$ is the event that $\pi(u_1),\ldots,\pi(u_k)$ are all in different components of $\widehat \F$.

Let $\widehat{\mathbf{X}}=(\widehat{X}^\ell)_{\ell=1}^k$ be independent lazy random walks on $\widehat G$ that are conditionally independent of $\F$ and $\widehat \F$ given $\cG_r$ and satisfy
$\widehat{\mathbf{X}}_0=\pi(\mathbf{u})$.
Let $T_\ell$ and $\widehat T_\ell$ be the last times that the walks $ X^\ell$ and $\widehat X^\ell$ visit $B(u_1,r)$ and $\pi(B(u_1,r))$ respectively, and write $\mathbf{T}=(T_1,\ldots,T_k)$ and $\widehat{\mathbf{T}}=(\widehat T_1,\ldots,\widehat T_k)$. Define $\mathbf{X}_{\mathbf{T}+1}:=(X^\ell_{T_\ell+1})_{\ell = 1}^k$ and $\widehat{\mathbf{X}}_{\widehat{\mathbf{T}}+1}:=(\widehat{X}^\ell_{\widehat{T}_\ell+1})_{\ell = 1}^k$. 
Observe that, since $\widehat G$ is connected, the supports of the random variables $\mathbf{X}_{\mathbf{T}+1}$ and $\pi^{-1}(\widehat{\mathbf{X}}_{\widehat{\mathbf{T}}+1})$ are both equal to the set of vertices $v\in V\setminus B$ for which the random walk started at $v$ has a positive probability not to hit $B(u_1,r)$. Similarly,
the 
support of the random variable $(\mathbf{T},\mathbf{X}_{\mathbf{T}+1})$ is contained in the support of $(\widehat{\mathbf{T}},\pi^{-1}(\widehat{\mathbf{X}}_{ \widehat{\mathbf{T}}+1}))$.
Furthermore, for every $\mathbf{t}\in \N^k$ and $\mathbf{v}=(v_1,\ldots,v_k)\in V^k$ such that 
$\mathbf{T}=\mathbf{t}$ and $\mathbf{X}_{\mathbf{T}+1} = \mathbf{v}$ with positive probability, 
 we have the equality of conditional distributions
\begin{multline}
\label{eq:conditionallaws}
\bigl(\text{Law of } \bigl(\pi^{-1}(\widehat{\mathbf{X}}_{\widehat{\mathbf{T}}+n})\bigr)_{n\geq 1} \text{ given $\widehat{\mathbf{T}}=\mathbf{t}$ and  $\pi^{-1} (\widehat{\mathbf{X}}_{\widehat{\mathbf{T}}+1} )= \mathbf{v}$}\bigr)\\
 = \left(\text{Law of } (\mathbf{X}_{\mathbf{T}+n})_{n\geq 1} \text{ given $\mathbf{T}=\mathbf{t}$ and $\mathbf{X}_{\mathbf{T}+1} = \mathbf{v}$}\right). 
 \end{multline}
Similarly, for every $\mathbf{v}=(v_1,\ldots,v_k)\in V^k$ such that 
$\mathbf{T}=\mathbf{t}$ and $\mathbf{X}_{\mathbf{T}+1} = \mathbf{v}$ with positive probability, 
 we have the equality of conditional distributions
 \begin{multline}
\label{eq:conditionallaws2}
\bigl(\text{Law of } \bigl(\pi^{-1}(\widehat{\mathbf{X}}_{\widehat{\mathbf{T}}+n})\bigr)_{n\geq 1} \text{ given  $\pi^{-1}(\widehat{\mathbf{X}}_{\widehat{\mathbf{T}}+1}) = \mathbf{v}$}\bigr)
 = \left(\text{Law of } (\mathbf{X}_{\mathbf{T}+n})_{n\geq 1} \text{ given  $\mathbf{X}_{\mathbf{T}+1} = \mathbf{v}$}\right). 
 \end{multline}
  Indeed, both sides of both \eqref{eq:conditionallaws} and \eqref{eq:conditionallaws2} are equal to the law of a $k$-tuple of independent lazy random walks on $G$, started at the vertices $(v_\ell)_{\ell=1}^k$ and conditioned not to return to $B(u_1,r)$. 

We deduce from \eqref{eq:conditionallaws} that $\widehat G$ is Liouville: If $\sI \subseteq \widehat V^{\N}$ is an invariant event, then $\sI' = \{ (v_n)_{ n \geq 0} \in V^{\N} : (\pi(v_n))_{n \geq 0} \in \sI\}$ is also an invariant event, and hence that 
\[\P((X^1_n)_{n\geq 1} \in \sI')=\P((X^1_{T_1+n})_{n\geq 1} \in \sI') \in \{0,1\}\] since $G$ is Liouville, from which it follows that
$\P((X^1_{T_1+n})_{n\geq 1} \in \sI' \mid X^1_{T_1}) \in \{0,1\}$
almost surely. 
The equality of the supports of $X^1_{T_1+1}$ and of $\widehat X ^1_{\widehat T_1 +1 }$ and of the conditional laws \eqref{eq:conditionallaws} then implies that $\P(\widehat X^1 \in \sI)$ is also equal to either zero or one, and since $\sI$ was arbitrary it follows that $\widehat G$ is Liouville as claimed. Thus, applying \cref{lem:indistlem} to both $G$ and $\widehat G$ yields that 
\begin{equation}
 \P\left((\F,\mathbf{u}) \in \sA \mid  \sW \right) = \lim_{m\to\infty} \P\left((\F,\mathbf{X}_m) \in \sA\right)
 \label{eq:indistlem2eq4}
\end{equation}
and
\begin{align}
\P\Bigl(\bigl(\widehat\F,\pi(\mathbf{u})\bigr) \in \widehat{\hspace{0cm}\sA} \mid  \widehat{\hspace{0cm}\sW}\Bigr)
= 
\lim_{m\to\infty}
\P\Bigl(\bigl(\widehat\F, \widehat{\mathbf{X}}_m\bigr) \in \widehat{\hspace{0cm}\sA} \Bigr). 
\label{eq:indistlem2eq5}
\end{align}

Let $\mathbf{t}$ and $\mathbf{v}$ be such that $\mathbf{T}=\mathbf{t}$ and $
\mathbf{X}_{\mathbf{T}+1} = \mathbf{v}$ with positive probability. Conditioning on $\F$ and applying 
 \cref{lem:tailtriviality} we deduce that 
\begin{equation}
\lim_{m\to\infty}\bigl|\P\left(( \F,\mathbf{X}_m) \in \sA \mid \F,\,  
\mathbf{T} = \mathbf{t},\, 
\mathbf{X}_{\mathbf{T}+1} = \mathbf{v}\right)
-
\P\left(( \F,\mathbf{X}_m) \in \sA \mid \F \right)\bigr| =0
\end{equation}
almost surely, and hence that
\begin{multline}
\label{eq:tv1}
\lim_{m\to\infty}\bigl|\P\left(( \F,\mathbf{X}_m) \in \sA \mid  \F \cap B(u_1,r) =A,\,
\mathbf{T} = \mathbf{t},\, 
\mathbf{X}_{\mathbf{T}+1} = \mathbf{v}\right)\\
-
\P\left(( \F,\mathbf{X}_m) \in \sA  \mid \F \cap B(u_1,r) =A \right)\bigr| =0.
\end{multline}
Applying a similar analysis to $\widehat G$ yields that
\begin{equation}
\lim_{m\to\infty}\bigl|\P\bigl(( \widehat \F, \widehat{\mathbf{X}}_m) \in \widehat{\hspace{0cm}\sA} \mid   
\widehat{\mathbf{T}} = \mathbf{t},\, 
\pi^{-1}(\widehat{\mathbf{X}}_{\widehat{\mathbf{T}}+1}) = \mathbf{v}\bigr)- \P\bigl(( \widehat \F, \widehat{\mathbf{X}}_m) \in \widehat{\hspace{0cm}\sA}  \bigr)\bigr| =0.
\label{eq:tv2}
\end{equation}
On the other hand, the spatial Markov property of the USF and the equality of conditional laws \eqref{eq:conditionallaws} implies that
\begin{multline}
\P\left(( \F,\mathbf{X}_m) \in \sA \mid \F \cap B(u_1,r) =A,\,  
\mathbf{T} = \mathbf{t},\, 
\mathbf{X}_{\mathbf{T}+1} = \mathbf{v}\right)\\
=
\P\bigl(( \widehat{\F}, \widehat{\mathbf{X}}_m) \in \widehat{\hspace{0cm}\sA} \mid   
\widehat{\mathbf{T}} = \mathbf{t},\, 
\pi^{-1}(\widehat{\mathbf{X}}_{\widehat{\mathbf{T}}+1}) = \mathbf{v}\bigr)
\label{eq:tv3}
\end{multline}
for every $m\geq 1+\max_{1\leq i \leq k} t_i$. 
Together, \eqref{eq:tv1}, \eqref{eq:tv2}, and \eqref{eq:tv3} imply that
\begin{equation}
\lim_{m\to\infty}\Bigl| \P\left(( \F,\mathbf{X}_m) \in \sA \mid \F \cap B(u_1,r) =A \right) - \P\bigl((\widehat \F,\widehat{\mathbf{X}}_m) \in \widehat{\hspace{0cm}\sA} \bigr)\Bigr| = 0
\label{eq:indistlem2eq2}
\end{equation}
almost surely, 
which yields the claim when combined with  \eqref{eq:indistlem2eq4} and \eqref{eq:indistlem2eq5}. \qedhere

\end{proof}

\setstretch{1.1}

We are now ready to complete the proof of \cref{thm:indist}. 

\begin{proof}[Proof of \cref{thm:indist}]
By \cref{lem:constantprobs}, it remains to prove only that $P(\sA)\in \{0,1\}$ for every tail $k$-component property $\sA$. 
We continue to use the notation of \cref{lem:indistlem,lem:indistlem2}. 
Since $\bigcup_{r\geq0} \cG_r$ generates the product $\sigma$-algebra of $\{0,1\}^E$, it suffices to prove that
\begin{equation}
\label{eq:indistclaim1}
 \P\left((\F,\mathbf{u}) \in \sA \mid \cG_r,\, \sW\right) = \P\left((\F,\mathbf{u}) \in \sA \mid \sW\right) =P(\sA)
\quad \text{ a.s.}\end{equation}
for every tail $k$-component property $\sA$, every $\mathbf{u}=(u_1,\ldots,u_k)\in V^k$ with $\P(\sW(\mathbf{u}))>0$, and every $r \geq 1$.
By \cref{lem:indistlem,lem:indistlem2}, to prove \eqref{eq:indistclaim1} it suffices to prove that 
\begin{equation}
\label{eq:indistclaim3}
\lim_{m\to\infty} \P\left((\F,\mathbf{X}_m) \in \sA \right) =  \lim_{m\to\infty}\P\left((\F,\mathbf{X}_m) \in \sA \mid \cG_r\right) \quad \text{ a.s.}
\end{equation}

Let $\sD_{m,R}$ be the event that the future of $X^\ell_m$ in $\F$ is contained in the complement of $B(u_1,R)$ for every $1\leq \ell \leq k$, and let $\sE_{m,R} = \{ (\F,\mathbf{X}_m) \in \sA\} \cap \sD_{m,R}$. In particular, $X^\ell_m \in V \setminus B(u_1,R)$ for every $1\leq \ell\leq k$ on the event $\sE_{m,R}$. 
 Moreover, since $\F$ is one-ended almost surely and $G$ is transient, we have that $\P(\sD_{m,R}) \to 1$ as $m\to\infty$ and hence that 
\[\lim_{m\to\infty} \left| \P(\sE_{m,R}) - \P\left(\left(\F,\mathbf{X}_m\right) \in \sA\right)\right|=0 \] 
for every $R\geq 1$. Thus, there exists a sequence $m(R)$ growing sufficiently quickly that 
\begin{equation}
\label{eq:EmRR}
\lim_{R\to\infty} \Bigl| \P(\sE_{m(R),R}) - \P\bigl((\F,\mathbf{X}_{m(R)}) \in \sA\bigr)\Bigr|=0. 
\end{equation}

Let $A$ be a set of edges such that $\P(\F \cap B(u_1,r) = A ) >0$. Since $\sA$ is a tail property and every component of $\F$ is one-ended almost surely, the event $\sE_{m,R}$
is measurable (up to a null set) with respect to the $\sigma$-algebra generated by $\cG^R$ and $\mathbf{X}_m$. Thus, by tail-triviality of the uniform spanning forest, conditioning on $\mathbf{X}$ and applying \cref{lem:tailtriviality} yields that
\begin{equation*}
\lim_{R\to\infty} \left|\P\left(\left\{\F \cap B(u_1,r)=A\right\} \cap \sE_{m(R),R} \mid \mathbf{X}\right) -  \P\left(\F \cap B(u_1,r)=A\right)\P\left(\sE_{m(R),R} \mid \mathbf{X} \right)\right| = 0 
\end{equation*}
almost surely. Taking expectations over $\mathbf{X}$, we deduce that
\begin{equation*}
\lim_{R\to\infty} \left|\P\left(\left\{\F \cap B(u_1,r)=A\right\} \cap \sE_{m(R),R}\right)-  \P(\F \cap B(u_1,r)=A)\P\left(\sE_{m(R),R}\right)\right| = 0.
\end{equation*}
Dividing through by 
$\P(\F \cap B(u_1,r) = A)$ then yields that
\begin{align*}\lim_{R\to\infty}
\left|\P\left( \sE_{m(R),R} \mid  \F \cap B(u_1,r) =A \right) - \P\left( \sE_{m(R),R}\right)\right| =0
\end{align*}
and applying \eqref{eq:EmRR} we deduce that
\begin{equation*}\lim_{R\to\infty}
\left|\P\left( \left(\F,\mathbf{X}_{m(R)}\right) \in \sA  \mid  \F \cap B(u_1,r) =A \right) - \P\left( \left(\F,\mathbf{X}_{m(R)}\right) \in \sA\right)\right| =0.
\end{equation*}
Since $A$ was arbitrary, the claimed equality \eqref{eq:indistclaim3} follows.
\end{proof}


\section{The Liouville property is necessary}

\label{sec:closing}

Suppose $G$ is a non-Liouville graph such that every component of the USF of $G$ is one-ended almost surely. 
Let $\sA$ be a non-trivial invariant event for the random walk on $G$ and let $h(v)=\bP_v(\sA)$ be the associated bounded harmonic function.
It follows from the martingale convergence theorem that $h(X_n) \to \mathbbm{1}(A)$ almost surely as $n\to\infty$ whenever $X$ is a random walk on $G$. Thus, by Wilson's algorithm, almost surely for every tree of $\F$, the value of $h$ converges as we move progressively higher up the tree. Thus, we can assign a value of either zero or one to each tree of $\F$, and the value of the tree is a tail component property. Moreover, it is easy to see that there must be trees with both values zero and one a.s. This shows that, without the Liouville condition, \cref{thm:indist} always fails even in the case $k=1$. 


If $G$ is a transitive graph, it is natural to consider tail multicomponent properties that are invariant under the automorphisms of $G$. In this case, \cite[Theorem 1.20]{HutNach2016a} implies the indistinguishability of individual components of the WUSF by automorphism invariant tail properties, corresponding to the case $k=1$ of \cref{thm:indist}. The question of whether a similar result holds for larger $k$ seems to depend on the symmetries of $G$. For example, if $G$ is the $7$-regular triangulation, then we can consider the circle packing of $G$ into the unit disc (see e.g.\ \cite{Rohde11} and references therein), which is unique up to M\"obius transformations and reflections. 
Every tree in the wired uniform spanning forest of $G$ has a unique limit point in the unit circle under this embedding \cite{BS96a}, and we can define a tail $4$-component property by asking whether, given some $4$-tuple of trees, the cross-ratio of their limit points has absolute value greater than one. Some $4$-tuples of trees in the WUSF will satisfy this property while others will not, so that it is possible to distinguish $4$-tuples of distinct trees in the WUSF via tail $4$-component properties. On the other hand, the unit circle is the Poisson boundary of $G$ \cite{ABGN14} and hence, intuitively, all the tail information about a collection of trees should be contained in their collection of limit points. Since the group of M\"obius transformations of the unit disc acts $3$-transitively on the unit circle, it is plausible that it should be impossible to distinguish $3$-tuples of distinct trees in the WUSF of this graph via multicomponent properties. 

\subsection*{Acknowledgments}
This work took place while the author was an intern at Microsoft Research, Redmond.


\footnotesize{
\bibliographystyle{abbrv}
\bibliography{unimodular}
}
\end{document}